\def\ds{\displaystyle}
\def\HH{\overline{H}}
\def\Li{\mbox{Li}}
\def\NN{\mathbb N}
\newtheorem{theorem}{Theorem}
\newtheorem{lemma}{Lemma}
\newtheorem{corollary}{Corollary}
\newtheorem{proposition}{Proposition}
\theoremstyle{definition}
\begin{document}

\title{Congruences concerning Jacobi polynomials and Ap\'ery-like formulae}

\author{Kh.~Hessami Pilehrood}

\author{T.~Hessami Pilehrood}


\address{Department of Mathematics and Statistics, Dalhousie University, Halifax, Nova Scotia, B3H~3J5, Canada}

\email{hessamik@gmail.com, hessamit@gmail.com}

\author{R.~Tauraso}
\address{
Dipartimento di Matematica,
Universit\`a di Roma ``Tor Vergata'', 00133 Roma, Italy
}

\email{tauraso@mat.uniroma2.it}

\subjclass{11A07, 33C45, 11B65, 11B39 11B68.}

\date{}

\keywords{Congruence, finite central binomial sum,  harmonic sum, Bernoulli number,  Euler number, Jacobi polynomials}

\begin{abstract}
Let $p>5$ be a prime. We prove congruences modulo $p^{3-d}$
for sums of the general form $\sum_{k=0}^{(p-3)/2}\binom{2k}{k}t^k/(2k+1)^{d+1}$
and $\sum_{k=1}^{(p-1)/2}\binom{2k}{k}t^k/k^d$
with $d=0,1$. We also consider the special case $t=(-1)^{d}/16$
of the former sum, where the congruences hold modulo $p^{5-d}$.
\end{abstract}

\maketitle

\section{Introduction}\label{intro}

In proving the irrationality of $\zeta(3),$ Ap\'ery \cite{Ap:79}
mentioned the formulae
$$
\sum_{k=1}^{\infty}\frac{1}{k^2}\binom{2k}{k}^{-1}=\frac{1}{3}\,\zeta(2), \qquad
\sum_{k=1}^{\infty}\frac{(-1)^k}{k^3}\binom{2k}{k}^{-1}=-\frac{2}{5}\,\zeta(3).
$$
Later Koecher \cite{Ko:80}, and Leshchiner \cite{Le:81}, found several
analogous results for other $\zeta(r).$ In particular, in \cite{Le:81},
by the use of some combinatorial identities it was shown that
\begin{align}
\sum_{k=0}^{\infty}\frac{\binom{2k}{k}}{16^k(2k+1)}&=\frac{4}{3}\sum_{k=0}^{\infty}
\frac{(-1)^k}{2k+1}=\frac{\pi}{3}, \label{id1} \\
\sum_{k=0}^{\infty}\frac{\binom{2k}{k}}{(-16)^k(2k+1)^2}&=\frac{4}{5}\sum_{k=0}^{\infty}
\frac{1}{(2k+1)^2}=\frac{\pi^2}{10}. \label{id2}
\end{align}
The above evaluations are related to the power series of $\arcsin(z),$
\begin{equation} \label{arcsin}
\arcsin(z)=\sum_{k=0}^{\infty}\frac{\binom{2k}{k} z^{2k+1}}{4^k(2k+1)}, \qquad |z|\le 1,
\end{equation}
and its integral $\int_0^z\frac{\arcsin(y)}{y}\,dy,$ which can be expressed in closed form
(see \cite[Theorem~5.1]{BC:07}) as
\begin{equation} \label{intarcsin}
\begin{split}
\sum_{k=0}^{\infty}\frac{\binom{2k}{k} z^{2k+1}}{4^k(2k+1)^2}
&=-\frac{i}{2}\Li_2((\sqrt{1-z^2}+iz)^2)-\frac{i}{2}\arcsin^2(z) \\
&+\arcsin(z)\log(2z^2-2iz\sqrt{1-z^2})
+\frac{i}{2}\zeta(2), \quad |z|\le 1,
\end{split}
\end{equation}
where $\Li_r(z)=\sum_{k=1}^{\infty}z^k/k^r$ is the polylogarithmic function.

Recently, Z.~W.~Sun \cite{Sunzw:11}, \cite[p.~27]{Sunzw:10} pointed out
that series (\ref{arcsin}), (\ref{intarcsin}) for some
special values of variable admit nice finite $p$-analogues. He obtained the congruences for partial sums
$$
\sum_{k=0}^{(p-3)/2}\frac{\binom{2k}{k} t^k}{(2k+1)^{d+1}} \pmod{p^{2-d}}
$$
for $t=1/4, 1/8, 1/16, 3/16$ if $d=0,$ and $t=1/4$ if $d=1.$

In this paper, we refine Sun's congruences to a higher power of $p$ and obtain polynomial congruences for
general sums of the form
\begin{equation} \label{first}
\sum_{k=0}^{(p-3)/2}\frac{\binom{2k}{k} t^k}{(2k+1)^{d+1}} \qquad\text{and}\qquad
\sum_{k=1}^{(p-1)/2}\frac{\binom{2k}{k}t^k}{k^d} \pmod{p^{3-d}},
\end{equation}
with $d=0, 1.$
Our approach is based on application of functional properties of Jacobi polynomials
$P_n^{(\alpha,\beta)}(x),$ which are defined in terms of the Gauss hypergeometric function
$$
F(a,b;c;z)=\sum_{k=0}^{\infty}\frac{(a)_k(b)_k}{k! (c)_k}z^k,
$$
where $(a)_0=1,$ $(a)_k=a(a+1)\cdots(a+k-1),$ $k\ge 1,$ is the Pochhammer symbol, as
\begin{equation}
P_n^{(\alpha,\beta)}(x)=\frac{(\alpha+1)_n}{n!}\,F(-n,n+\alpha+\beta+1;\alpha+1;(1-x)/2),
\quad \alpha, \beta>-1.
\label{eq01}
\end{equation}
They satisfy the three term recurrence relation \cite[Section 4.5]{Sz:59}
\begin{equation}
\begin{split}
2(n+1)&(n+\alpha+\beta+1)(2n+\alpha+\beta)P_{n+1}^{(\alpha,\beta)}(x) \\
&=[(2n+\alpha+\beta+1)(\alpha^2-\beta^2)+(2n+\alpha+\beta)_3x]P_n^{(\alpha,\beta)}(x) \\
&\qquad-2(n+\alpha)(n+\beta)(2n+\alpha+\beta+2)P_{n-1}^{(\alpha,\beta)}(x)
\label{eq02}
\end{split}
\end{equation}
with the initial conditions $P_0^{(\alpha,\beta)}(x)=1,$ $P_1^{(\alpha,\beta)}(x)=(x(\alpha+\beta+2)+\alpha-\beta)/2.$

Some special cases of the Jacobi  polynomials have already been used for proving many remarkable
congruences containing central binomial sums. Thus, for example, the short proof of the
Rodriguez-Villegas famous congruence \cite{RV:03}
\begin{equation}
\sum_{k=0}^{(p-1)/2}\frac{\binom{2k}{k}^2}{16^k}\equiv (-1)^{\frac{p-1}{2}} \pmod{p^2},
\label{eq02.5}
\end{equation}
first confirmed by Mortenson~\cite{Mo:03} via the $p$-adic $\Gamma$-function
and the Gross-Koblitz formula, was given by Z.~H.~Sun~\cite{Sunzh:11} with the help of
Legendre polynomials $P_n(x),$ which are a special case of the Jacobi polynomials: $P_n(x)=P_n^{(0,0)}(x).$
The classical Chebyshev polynomials of the first and second kind, $T_n(x)$ and $U_n(x),$
corresponding to the case $\alpha=\beta=-1/2$ and $\alpha=\beta=1/2$ in (\ref{eq01}), respectively,
\begin{equation}
T_n(x)=\frac{n!}{(1/2)_n}\,P_n^{(-1/2,-1/2)}(x), \quad
U_n(x)=\frac{(n+1)!}{(3/2)_n}\,P_n^{(1/2, 1/2)}(x),
\label{eq03}
\end{equation}
were applied in many works on congruences (see, for example,
\cite{MT:11, Sunzw3, Sunzw:11, ST:10}).
In \cite{MT:11}, S.~Mattarei and R.~Tauraso used the classical Chebyshev polynomials
\begin{equation}
u_0(x)=0, \quad u_n(x)=U_{n-1}(x/2) \quad (n\ge 1), \quad  \quad v_n(x)=2T_n(x/2) \quad (n\ge 0)
\label{eq06}
\end{equation}
to obtain congruences for the finite sums
$$
\sum_{k=1}^{p-1}\frac{t^k}{k^d\binom{2k}{k}} \pmod{p^2}, \qquad
\sum_{k=1}^{p-1}\frac{t^{p-k}H_k(2)\binom{2k}{k}}{k^d} \pmod{p},
$$
where $H_k(m)=\sum_{j=1}^k1/j^m$ and $d=0,1,2,$ which
are related to  series expansions of even powers of $\arcsin$ and
reveal some connections with infinite series
for zeta values (more on zeta series and congruences related to the above sums may be found in \cite{Ta:10, HH:11}).

Note that the sequences $u_n(x)$ and $v_n(x)$ given by (\ref{eq06}) belong to a class of the so called Lucas sequences
$u_n(x,y)$ and $v_n(x,y)$ which are defined by the recurrence relations
\begin{align*}
u_0(x,y)=0, \quad & u_1(x,y)=1, & \text{and}\quad & u_n(x,y)=xu_{n-1}(x,y)-yu_{n-2}(x,y) & \text{for}\,\, n>1, \\
v_0(x,y)=2, \quad & v_1(x,y)=x, & \text{and}\quad & v_n(x,y)=xv_{n-1}(x,y)-yv_{n-2}(x,y) & \text{for}\,\, n>1.
\end{align*}
Namely, one has $u_n(x)=u_n(x,1)$ and $v_n(x)=v_n(x,1).$

The paper is organized as follows. In Section \ref{Section2}, we prove some results concerning divisibility properties
of multiple harmonic sums. In Section~\ref{Section3},
 we consider two ``mixed'' cases of Jacobi polynomials, $P_n^{(1/2, -1/2)}(x)$ and
$P_n^{(-1/2,/2)}(x),$ and study their properties with application to congruences.
In Section~\ref{Section4}, we get some numerical congruences.
In  Section~\ref{Section5}, by revisiting the combinatorial proof (due to D.~Zagier)
presented in \cite[Section 5]{Le:81}, we obtain the finite versions of identities (\ref{id1}) and (\ref{id2}),
which allow us to improve significantly congruences for the first sum in (\ref{first}) in the special case $t=(-1)^d/16.$


\section{Results concerning multiple harmonic sums}
\label{Section2}

We define the {\it multiple harmonic sum} as
$$H_n(a_1,a_2,\dots,a_r)=
\sum_{0< k_1<k_2<\dots<k_r\leq n}\; \frac{1}{k_1^{a_1}k_2^{a_2}\cdots k_r^{a_r}},$$
where $n\geq r>0$ and $(a_1,a_2,\dots,a_r)\in (\NN^*)^r$.
We also introduce the multiple sum
$$\HH_n(a_1,a_2,\dots,a_r)=
\sum_{0\leq k_1<k_2<\dots<k_r<n}\; \frac{1}{(2k_1+1)^{a_1}(2k_2+1)^{a_2}\cdots (2k_r+1)^{a_r}},$$
with $(a_1,a_2,\dots,a_r)\in (\NN^*)^r$. For the sake of brevity, if $a_1=a_2=\dots=a_r=a$, we write $H_n(\{a\}^r)$ and $\HH_n(\{a\}^r)$.
Note that both $H_n$ and $\HH_n$ satisfy the shuffle product property:
\begin{align*}
H_n(a)H_n(b)&= H_n(a,b)+H_n(b,a)+H_n(a+b),\\
\HH_n(a)\HH_n(b)&= \HH_n(a,b)+\HH_n(b,a)+\HH_n(a+b)
\end{align*}
for any positive integers $n,a,b$.

The values of many harmonic sums modulo a power of prime $p$ are well known.
Here is a list of results that we will need later.

\begin{enumerate}

\item[(i)] (\cite[Theorem~5.1]{Sunzh:00}) for any prime $p>r+2$ we have
\begin{align*}
H_{p-1}(r)\equiv
\begin{cases}
-\frac{r(r+1)}{2(r+2)}\,p^2\,B_{p-r-2}  \pmod{p^3} &\mbox{if $r$ is odd,}\\
\frac{r}{r+1}\,p\,B_{p-r-1}  \pmod{p^2} &\mbox{if $r$ is even;}
\end{cases}
\end{align*}

\item[(ii)] (\cite[Theorem~3.1]{Zh:06}) for positive integers $r,s$  and for any prime $p>r+s$, we have
$$H_{p-1}(r,s)\equiv \frac{(-1)^s}{r+s}\binom{r+s}{s} \,B_{p-r-s}\pmod{p};$$

\item[(iii)] (\cite[Theorem~3.5]{Zh:06}) for positive integers $r,s,t$ and for any prime $p>r+s+t$ such that
$r+s+t$ is odd, we have
$$H_{p-1}(r,s,t)\equiv \frac{1}{2(r+s+t)}\left((-1)^r\binom{r+s+t}{r}-(-1)^t\binom{r+s+t}{t}\right) \,B_{p-r-s-t}\pmod{p};$$

\item[(iv)] (\cite[Theorem~2.1]{Ta:10}) for any prime $p>5$,
$$H_{p-1}(1)\equiv -\frac{1}{2}\,p H_{p-1}(2)-\frac{1}{6}\,p^2 H_{p-1}(3)\pmod{p^5};$$

\item[(v)] (\cite[Lemma~3]{HH:11}) for any prime $p>5$,
$$H_{p-1}(1,2)\equiv -3\,\frac{H_{p-1}(1)}{p^2}+\frac{1}{2}\,p^2B_{p-5}\pmod{p^3};$$

\item[(vi)] (\cite[Theorem~5.2]{Sunzh:00}) for any prime $p>r+4$ we have
\begin{align*}
H_{\frac{p-1}{2}}(r)\equiv
\begin{cases}
-2q_p(2)+p q^2_p(2)-p^2\left(\frac{2}{3}\,q^3_p(2)
+\frac{7}{12}\,B_{p-3}\right)  \pmod{p^3} &\mbox{if $r=1$,}\\[6pt]
\frac{r(2^{r+1}-1)}{2(r+1)}\,p\,B_{p-r-1}  \pmod{p^2} &\mbox{if $r$ is even,}\\[6pt]
-\frac{2^r-2}{r}\,B_{p-r}  \pmod{p} &\mbox{if $r>1$ is odd,}
\end{cases}
\end{align*}
where $q_p(a)=(a^{p-1}-1)/p$ is the so-called {\sl Fermat quotient}.
\end{enumerate}

The next three results are useful congruences involving $H_{\frac{p-1}{2}}(r)$ and $H_{\frac{p-1}{2}}(r,s)$.

\begin{lemma}\label{Ldue} Let $r,a$ be positive integers. Then for any prime $p>r+2,$
\begin{align}\label{C1}
H_{p-1}(r)&\equiv H_{\frac{p-1}{2}}(r)+(-1)^r\sum_{k=0}^a\binom{r-1+k}{k} H_{\frac{p-1}{2}}(r+k)p^k\pmod{p^{a+1}}.
\end{align}
Moreover, if $r,s$ are positive integers such that $r+s$ is odd, then for any prime $p>r+s,$
\begin{equation}\label{C2}
H_{\frac{p-1}{2}}(r,s)\equiv {B_{p-r-s}\over 2(r+s)}\left((-1)^s{r+s \choose s}+2^{r+s}-2\right)\pmod{p}.
\end{equation}
\end{lemma}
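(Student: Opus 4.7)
The plan for (\ref{C1}) is to split $H_{p-1}(r)$ at the midpoint $(p-1)/2$ and fold the upper half via $k\mapsto p-k$, giving
\[
H_{p-1}(r) = H_{\frac{p-1}{2}}(r) + \sum_{k=1}^{(p-1)/2}\frac{1}{(p-k)^r}.
\]
Since $\gcd(k,p)=1$, I can expand
\[
\frac{1}{(p-k)^r} = \frac{(-1)^r}{k^r}\left(1-\frac{p}{k}\right)^{-r} = (-1)^r\sum_{j\ge 0}\binom{r-1+j}{j}\frac{p^j}{k^{r+j}}
\]
and truncate this $p$-adically convergent series at $j=a$, discarding an $O(p^{a+1})$ tail. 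Swapping the two summations identifies the inner sum as $H_{\frac{p-1}{2}}(r+j)$, which gives (\ref{C1}) at once.

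For (\ref{C2}) I apply the same ``fold in the middle'' idea to $H_{p-1}(r,s)=\sum_{0<i<j\le p-1} i^{-r}j^{-s}$. I split the range of the outer index $j$ at $(p-1)/2$; in the upper half I substitute $j=p-j'$, and this in turn forces me to further split the inner index $i$ at $(p-1)/2$, with the substitution $i=p-i'$ (and $i'>j'$) in its upper part. Reducing mod $p$ (so $p-k\equiv -k$) and collecting the four pieces produces
\[
H_{p-1}(r,s)\equiv H_{\frac{p-1}{2}}(r,s)+(-1)^{s}H_{\frac{p-1}{2}}(r)H_{\frac{p-1}{2}}(s)+(-1)^{r+s}H_{\frac{p-1}{2}}(s,r)\pmod{p},
\]
with $(-1)^{r+s}=-1$ by the parity hypothesis on $r+s$.

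The parity hypothesis also guarantees that exactly one of $r,s$ is even, so by item~(vi) the corresponding $H_{\frac{p-1}{2}}$ is divisible by $p$ and the cross product $H_{\frac{p-1}{2}}(r)H_{\frac{p-1}{2}}(s)$ vanishes modulo $p$. The shuffle relation $H_{\frac{p-1}{2}}(r)H_{\frac{p-1}{2}}(s)=H_{\frac{p-1}{2}}(r,s)+H_{\frac{p-1}{2}}(s,r)+H_{\frac{p-1}{2}}(r+s)$ then eliminates the reversed term, collapsing the display above to
\[
H_{p-1}(r,s)\equiv 2H_{\frac{p-1}{2}}(r,s)+H_{\frac{p-1}{2}}(r+s)\pmod{p}.
\]
Substituting item~(ii) for $H_{p-1}(r,s)$ and the odd-index case of item~(vi) for $H_{\frac{p-1}{2}}(r+s)$ (using $r+s\ge 3$), both of which carry the factor $B_{p-r-s}$, and solving for $H_{\frac{p-1}{2}}(r,s)$ yields the claimed closed form. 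The only part where one can easily slip is the bookkeeping of the four subregions and the signs $(-1)^r,(-1)^s$ arising from the folds; once the cross-product vanishing under the parity hypothesis is noted, the shuffle identity does the rest.
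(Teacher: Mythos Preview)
Your proof is correct and follows essentially the same route as the paper's: for \eqref{C1} both split $H_{p-1}(r)$ at the midpoint, write the upper half as $(-1)^r\sum_k k^{-r}(1-p/k)^{-r}$, and expand by the negative binomial series; for \eqref{C2} both fold the double sum to obtain $H_{p-1}(r,s)\equiv H_{\frac{p-1}{2}}(r,s)+(-1)^s H_{\frac{p-1}{2}}(r)H_{\frac{p-1}{2}}(s)-H_{\frac{p-1}{2}}(s,r)\pmod p$, kill the middle term via parity and (vi), eliminate the reversed double sum by the shuffle relation, and finish with (ii) and the odd case of (vi). The only cosmetic difference is that the paper records the folded decomposition as three pieces (according to whether $j\le\frac{p-1}{2}$, or $j>\frac{p-1}{2}$ with $i\le\frac{p-1}{2}$, or both indices in the upper half) rather than four.
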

\begin{proof} The congruence \eqref{C1} follows immediately from the identity
$$H_{{p-1}}(r)=H_{{p-1\over 2}}(r)+(-1)^r\sum_{k=1}^{\frac{p-1}{2}}\frac{1}{k^r(1-p/k)^r}.$$
Now we show \eqref{C2}:
\begin{align*}
H_{{p-1}}(r,s)&=\!\!\!\!
\sum_{1\leq i<j\leq {p-1\over 2}}{1\over i^r j^s}
+\sum_{1\leq i,j\leq {p-1\over 2}}{1\over i^r (p-j)^s}
+\sum_{1\leq j<i\leq {p-1\over 2}}{1\over (p-i)^r (p-j)^s}\\
&\equiv
H_{{p-1\over 2}}(r,s)+(-1)^s\,H_{{p-1\over 2}}(r)H_{{p-1\over 2}}(s)+(-1)^{r+s}H_{{p-1\over 2}}(s,r)\\
&\equiv
H_{{p-1\over 2}}(r,s)-H_{{p-1\over 2}}(s,r) \pmod{p}.
\end{align*}
By the shuffle product property
$$H_{{p-1\over 2}}(r,s)+H_{{p-1\over 2}}(s,r)+H_{{p-1\over 2}}(r+s)=H_{{p-1\over 2}}(r)H_{{p-1\over 2}}(s)\equiv 0\pmod{p}.$$
Hence
$$2H_{{p-1\over 2}}(r,s)\equiv H_{p-1}(r,s)-H_{{p-1\over 2}}(r+s)\pmod{p}$$
and by applying (ii) and (vi) we obtain \eqref{C2}.
\end{proof}

\begin{theorem}\label{T21} For any prime $p>2,$
\begin{equation}\label{Wmezzo}
H_{{p-1\over 2}}(2)+{7\over 6}\, p\,H_{{p-1\over 2}}(3) +{5\over 8}\, p^2\,H_{{p-1\over 2}}(4)\equiv 0
\pmod{p^4}.
\end{equation}
\end{theorem}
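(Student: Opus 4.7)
My plan is to compare two different formal expansions of $H_{p-1}(1)$ in terms of the half-range sums $S_r:=H_{\frac{p-1}{2}}(r)$, both valid modulo $p^5$, and then use (vi) for $r=4,5$ to reshape the resulting identity into \eqref{Wmezzo}.

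The first expansion comes directly from \eqref{C1} applied with $r=1$ and $a=4$:
\[ H_{p-1}(1) \equiv -pS_2-p^2 S_3-p^3 S_4-p^4 S_5 \pmod{p^5}. \]
For the second, I split $H_{p-1}(1)$ into even- and odd-indexed parts,
\[ H_{p-1}(1)=\sum_{m=1}^{(p-1)/2}\frac{1}{2m}+\sum_{m=1}^{(p-1)/2}\frac{1}{p-2m}=\tfrac12 S_1+\HH_{\frac{p-1}{2}}(1), \]
re-indexing the odd integers $j\in\{1,3,\dots,p-2\}$ as $j=p-2m$. Expanding $\frac{1}{p-2m}=-\sum_{k\ge 0}p^k/(2m)^{k+1}$ as a formal geometric series and collecting powers of $p$, the $\tfrac12 S_1$ contributions cancel, giving
\[ H_{p-1}(1) \equiv -\tfrac{p}{4}S_2-\tfrac{p^2}{8}S_3-\tfrac{p^3}{16}S_4-\tfrac{p^4}{32}S_5 \pmod{p^5}. \]

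Subtracting the two congruences and dividing by $\tfrac{3p}{4}$ yields the intermediate relation
\[ S_2+\tfrac{7}{6}\,pS_3+\tfrac{5}{4}\,p^2 S_4+\tfrac{31}{24}\,p^3 S_5\equiv 0\pmod{p^4}. \]

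Next I apply (vi) with $r=4$, which gives $S_4\equiv\tfrac{62}{5}\,pB_{p-5}\pmod{p^2}$, and with $r=5$, which gives $S_5\equiv -6\,B_{p-5}\pmod{p}$, to derive the auxiliary identity
\[ \tfrac{5}{8}\,p^2 S_4+\tfrac{31}{24}\,p^3 S_5\equiv \tfrac{31}{4}\,p^3 B_{p-5}-\tfrac{31}{4}\,p^3 B_{p-5}=0\pmod{p^4}. \]
Subtracting this from the intermediate relation replaces $\tfrac{5}{4}p^2 S_4+\tfrac{31}{24}p^3 S_5$ by $\tfrac{5}{8}p^2 S_4$, which is exactly the content of \eqref{Wmezzo}.

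The main hurdle is carrying out the second expansion carefully: one must verify that the substitution $j=p-2m$ together with the geometric series expansion produces the stated coefficients $1/2^{k+1}$ modulo $p^5$. Once this is in place, the final Bernoulli cancellation is a two-line check.
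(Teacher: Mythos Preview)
Your argument is correct, and it follows a genuinely different path from the paper's.

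The paper proves \eqref{Wmezzo} directly via Faulhaber's formula: writing $H_{(p-1)/2}(r)\equiv\sum_{k\le (p-1)/2}k^{m-r}\pmod{p^4}$ with $m=\varphi(p^4)$, expanding each power sum as a Bernoulli-polynomial difference, and then solving a small linear system in the undetermined coefficients $\alpha_2,\alpha_3,\alpha_4$ so that the surviving Bernoulli contributions cancel. This produces the triple $(1,7/6,5/8)$ in one shot and uses nothing beyond Faulhaber, von Staudt--Clausen, and the identity $B_n(1/2)=(2^{1-n}-1)B_n$.

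Your route is more elementary in flavour: you never touch Bernoulli polynomials, but instead exploit two different partitions of $\{1,\dots,p-1\}$ (the ``lower half/upper half'' split underlying \eqref{C1}, and the ``even/odd'' split with $j=p-2m$) to obtain two independent $p$-adic expansions of $H_{p-1}(1)$ in the $S_r$. Subtracting gives a four-term relation; you then invoke (vi) for $r=4,5$ to collapse the $S_5$ term into $S_4$ with exactly the right coefficient. The algebra checks: the difference yields $S_2+\tfrac{7}{6}pS_3+\tfrac{5}{4}p^2S_4+\tfrac{31}{24}p^3S_5\equiv 0\pmod{p^4}$, and the auxiliary identity $\tfrac{5}{8}p^2S_4+\tfrac{31}{24}p^3S_5\equiv 0\pmod{p^4}$ follows from $S_4\equiv\tfrac{62}{5}pB_{p-5}$ and $S_5\equiv -6B_{p-5}$.

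One small caveat: your appeal to (vi) for $r=4,5$ carries the hypothesis $p>r+4$, so the argument as written only covers $p\ge 11$; and dividing by $3p/4$ presumes $p\neq 3$. The remaining primes $p=3,5,7$ must be verified directly (they are). The paper's Faulhaber argument is set up to work uniformly for $p>2$, which is a mild advantage; on the other hand, your method is conceptually cleaner and would adapt readily to higher-order analogues by pushing both expansions one step further in $p$.
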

\begin{proof}  Let $m=\varphi(p^4)=p^3(p-1)$ and let $B_n(x)$ be the $n$th Bernoulli polynomial.
For $r=2,3,4$, Faulhaber's formula implies
\begin{align*}
\sum_{k=1}^{(p-1)/2}k^{m-r}&={B_{m-r+1}\left({p+1\over 2}\right)-B_{m-r+1}\over m-r+1}\\
&= {B_{m-r+1}\left({p+1\over 2}\right)-B_{m-r+1}\left({1\over 2}\right)\over m-r+1}+{B_{m-r+1}\left({1\over 2}\right)-B_{m-r+1}\over m-r+1}\\
&= \sum_{k=r}^m{m-r\choose k-r}{B_{m-k}\left({1\over 2}\right)\over k-r+1}\cdot \left({p\over 2}\right)^{k-r+1}
+{B_{m-r+1}\left({1\over 2}\right)-B_{m-r+1}\over m-r+1}.
\end{align*}
By Euler's theorem, $H_{{p-1\over 2}}(r)\equiv \sum_{k=1}^{(p-1)/2}k^{m-r}\pmod{p^4}$.
Since $m$ is even, $B_{m-k}=0$ when $m-k>1$ and $k$ is odd. Moreover, $pB_{m-k}$ is $p$-integral
by von Staudt-Clausen theorem and
$$B_{m-k}\left({1\over 2}\right)=(2^{1-m+k}-1)B_{m-k}\equiv (2^{1+k}-1)B_{m-k}\pmod{p^4}.$$
Hence
\begin{align*}
\sum_{r=2}^4 \alpha_r\, p^{r-2} H_{{p-1\over 2}}(r)&\equiv
{p\over 2}\left(\alpha_2 -{6\over 7}\,\alpha_3\right)B_{m-2}\left({1\over 2}\right)\\
&\qquad +{p^3\over 8}\left(\alpha_2 -3\,\alpha_3+4\alpha_4\right)B_{m-4}\left({1\over 2}\right)
\pmod{p^4}.
\end{align*}
The right-hand side vanishes if we let $\alpha_2=1$, $\alpha_3=7/6$, and $\alpha_4=5/8$.
\end{proof}

\begin{corollary} For any prime $p>5,$
\begin{align}
\label{Hp2}
H_{{p-1}}(2)&\equiv -2\,{H_{{p-1}}(1)\over p}+{2\over 5}\,p^3 B_{p-5}\pmod{p^4},\\
\label{H2}
H_{{p-1\over 2}}(2)&\equiv -7\,{H_{{p-1}}(1)\over p}+{17\over 10}\,p^3 B_{p-5}\pmod{p^4},\\
\label{H3}
H_{{p-1\over 2}}(3)&\equiv 6\,{H_{{p-1}}(1)\over p^2}-{81\over 10}\,p^2 B_{p-5}\pmod{p^3},\\
\label{unoduepiuunotre}
H_{{p-1\over 2}}(1,2)+p\,H_{{p-1\over 2}}(1,3)
&\equiv -{9\over 2}\,{H_{p-1}(1)\over p^2} -{49\over 20}\,\, p^2 B_{p-5}
\pmod{p^{3}}.
\end{align}
\end{corollary}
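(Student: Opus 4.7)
The plan is to derive the four congruences in sequence \eqref{Hp2}$\to$\eqref{H3}$\to$\eqref{H2}$\to$\eqref{unoduepiuunotre}, each leaning on the preceding ones together with the ingredients (i)--(vi), Theorem~\ref{T21}, and Lemma~\ref{Ldue}.

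For \eqref{Hp2}, I rearrange (iv) to isolate $H_{p-1}(2)$ modulo $p^4$ and plug in $H_{p-1}(3)\equiv -\tfrac{6}{5}p^2 B_{p-5}\pmod{p^3}$ from (i). For \eqref{H3}, I apply \eqref{C1} with $r=2$, $a=3$, yielding
\[
H_{p-1}(2)\equiv 2H_{(p-1)/2}(2)+2p\,H_{(p-1)/2}(3)+3p^2 H_{(p-1)/2}(4)+4p^3 H_{(p-1)/2}(5)\pmod{p^4},
\]
then combine with Theorem~\ref{T21} (which eliminates $H_{(p-1)/2}(2)$), with (vi) for $H_{(p-1)/2}(4)\bmod p^2$ and $H_{(p-1)/2}(5)\bmod p$, and with \eqref{Hp2} to eliminate $H_{p-1}(2)$; solving for $H_{(p-1)/2}(3)$ completes this step. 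Congruence \eqref{H2} then follows immediately by substituting \eqref{H3} and (vi) into Theorem~\ref{T21} rewritten as $H_{(p-1)/2}(2)\equiv -\tfrac{7p}{6}H_{(p-1)/2}(3)-\tfrac{5p^2}{8}H_{(p-1)/2}(4)\pmod{p^4}$.

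The nontrivial step is \eqref{unoduepiuunotre}. Following the template of the proof of \eqref{C2}, I split $H_{p-1}(1,2)=\sum_{1\le i<j\le p-1}1/(ij^2)$ into three pieces according as each of $i,j$ lies in $[1,(p-1)/2]$ or $[(p+1)/2,p-1]$, changing variables $j\mapsto p-j$ in the middle piece and $i\mapsto p-i$, $j\mapsto p-j$ in the last; expanding $1/(p-k)$ and $1/(p-k)^2$ as geometric series in $p/k$ to order $p^2$ produces a linear combination of $H_{(p-1)/2}(1,2)$, $H_{(p-1)/2}(2,1)$, $H_{(p-1)/2}(3,1)$, the products $H_{(p-1)/2}(1)H_{(p-1)/2}(r)$ for $r=2,3,4$, and several double sums of weights $4,5$. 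Invoking the shuffle relations $H_{(p-1)/2}(2,1)=H_{(p-1)/2}(1)H_{(p-1)/2}(2)-H_{(p-1)/2}(1,2)-H_{(p-1)/2}(3)$ and $H_{(p-1)/2}(3,1)=H_{(p-1)/2}(1)H_{(p-1)/2}(3)-H_{(p-1)/2}(1,3)-H_{(p-1)/2}(4)$ makes the $H_{(p-1)/2}(1)H_{(p-1)/2}(r)$ terms cancel pairwise (the $r=4$ term is absorbed since $H_{(p-1)/2}(4)=O(p)$), leaving
\begin{align*}
2[H_{(p-1)/2}(1,2)+p\,H_{(p-1)/2}(1,3)]\equiv\ &H_{p-1}(1,2)-H_{(p-1)/2}(3)-2p\,H_{(p-1)/2}(4)+p\,H_{(p-1)/2}(2,2)\\
&{}+p^2[H_{(p-1)/2}(2,3)+2H_{(p-1)/2}(3,2)+3H_{(p-1)/2}(4,1)]\pmod{p^3}.
\end{align*}

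It remains to evaluate each right-hand summand: $H_{p-1}(1,2)$ via (v); $H_{(p-1)/2}(3)$ via \eqref{H3}; $H_{(p-1)/2}(4)\bmod p^2$ via (vi); $H_{(p-1)/2}(2,2)\equiv -\tfrac{1}{2}H_{(p-1)/2}(4)\pmod{p^2}$ from $H_{(p-1)/2}(2)^2=2H_{(p-1)/2}(2,2)+H_{(p-1)/2}(4)$ combined with $H_{(p-1)/2}(2)=O(p)$; and the three double sums $H_{(p-1)/2}(2,3),H_{(p-1)/2}(3,2),H_{(p-1)/2}(4,1)$ modulo $p$ via \eqref{C2}. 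Collecting coefficients of $H_{p-1}(1)/p^2$ and $p^2B_{p-5}$ then yields \eqref{unoduepiuunotre}. The main obstacle I anticipate is the careful bookkeeping of the many $p$- and $p^2$-order cross-terms arising from the three-way splitting, ensuring that each double sum that appears is resolved to sufficient precision by the correct clause of Lemma~\ref{Ldue} or the shuffle product.
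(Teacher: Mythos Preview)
Your proposal is correct and follows essentially the same route as the paper: \eqref{Hp2} from (iv) and (i); \eqref{H2} and \eqref{H3} by combining Theorem~\ref{T21} with the $r=2$ case of \eqref{C1} (plus (vi) and \eqref{Hp2}) and solving the resulting $2\times 2$ system; and \eqref{unoduepiuunotre} by the three-way splitting of $H_{p-1}(1,2)$, the shuffle relations for $H_n(1)H_n(2)$ and $H_n(1)H_n(3)$, and the evaluations (v), \eqref{H3}, (vi), \eqref{C2}. Your displayed intermediate identity for $2[H_{(p-1)/2}(1,2)+pH_{(p-1)/2}(1,3)]$ is exactly what the paper obtains before collapsing the weight-$4$ and weight-$5$ terms into the single constant $-\tfrac{27}{4}\,p^2B_{p-5}$, so the bookkeeping you flag as the main obstacle is precisely the computation the paper performs.
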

\begin{proof} The congruence \eqref{Hp2} follows from (iv) and (i).
By \eqref{Wmezzo} and (vi) we have
$$H_{{p-1\over 2}}(2)+{7\over 6}pH_{{p-1\over 2}}(3)+{31\over 4}\,B_{p-5}\,p^3\equiv 0\pmod{p^4}.$$
From \eqref{C1} and (vi), we deduce
$$H_{p-1}(2)\equiv 2H_{{p-1\over 2}}(2)+2pH_{{p-1\over 2}}(3)+{66\over 5}\,B_{p-5}\,p^3\pmod{p^4}.$$
By solving these two congruences with respect to $H_{{p-1\over 2}}(2)$ and $H_{{p-1\over 2}}(3)$, and by using \eqref{Hp2}, we get the result.
Now we show \eqref{unoduepiuunotre}.  For $n=(p-1)/2,$ we consider the identity
$$H_{p-1}(1,2)=H_n(1,2)+H_n(1)\sum_{j=1}^n{1\over (p-j)^2} +\sum_{1\leq j<i\leq n}{1\over (p-i)(p-j)^2}.$$
Then by expanding the sums like in Lemma~\ref{Ldue}, we get
\begin{align*}
H_{p-1}(1,2)&\equiv H_n(1,2)+H_n(1)(H_n(2)+2p\,H_n(3)+3p^2\,H_n(4))-H_n(2,1)\\
&\qquad -p\,(2H_n(3,1)+H_n(2,2))-p^2\,(3H_n(4,1)+2H_n(3,2)+H_n(2,3))
\pmod{p^{3}}.
\end{align*}
By applying the shuffle product property to $H_n(1)H_n(2)$ and $H_n(1)H_n(3)$,
and by using (i), (ii) and (vi), we obtain
$$
H_n(1,2)+p\,H_n(3,1)\equiv {1\over 2}\,H_{p-1}(1,2)-{1\over 2}\,H_n(3)-{27\over 4}\,p^2 B_{p-5}
\pmod{p^{3}}.$$
Thus the proof of \eqref{unoduepiuunotre} is complete as soon as we apply (v) and \eqref{H3}.
\end{proof}

The relations stated in the next lemma
allow us to determine $\HH_{\frac{p-1}{2}}(r)$ and $\HH_{\frac{p-1}{2}}(r,s)$
in terms of the multiple harmonic sums.

\begin{lemma}\label{lemmaHH}
For any positive integers $n,r$ we have
\begin{equation*}
\HH_n(r)=H_{2n}(r)-{H_n(r)\over 2^r}.
\end{equation*}
Moreover, for any odd prime $p$ and for any  $r,s\geq 1$, the following congruence holds modulo $p^3$
\begin{align*}
\HH_{\frac{p-1}{2}}(r,s)
&\equiv {1\over(-2)^{r+s}}\left(H_{\frac{p-1}{2}}(s,r)+{p\over 2}\left(r H_{\frac{p-1}{2}}(s,r+1)+
s H_{\frac{p-1}{2}}(s+1,r)\right)\right.\\
&\left.+{p^2\over 4}\left({r+1\choose 2}
H_{\frac{p-1}{2}}(s,r+2)
+rs H_{\frac{p-1}{2}}(s+1,r+1)
+{s+1\choose 2} H_{\frac{p-1}{2}}(s+2,r)\right)\right).
\end{align*}
\end{lemma}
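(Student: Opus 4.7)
For the first identity, the plan is simply to split the integers from $1$ to $2n$ into odd and even parts: the odd ones contribute $\HH_n(r)$, while the even ones $2,4,\ldots,2n$ contribute $H_n(r)/2^r$. So $H_{2n}(r)=\HH_n(r)+H_n(r)/2^r$, which rearranges to the stated formula.

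For the second identity, write $n=(p-1)/2$ and perform the change of variables $k_i\mapsto n-k_i$. Since $2(n-k)+1=p-2k$, the odd residues $1,3,\ldots,p-2$ get reindexed as $p-2,p-4,\ldots,1$, so the condition $0\le k_1<k_2<n$ becomes $1\le k_2'<k_1'\le n$; after relabelling (call the smaller index $a$ and the larger one $b$) this gives
$$\HH_n(r,s)=\sum_{1\le a<b\le n}\frac{1}{(p-2b)^r(p-2a)^s}.$$
Now I would expand each factor as a $p$-adic series: using $1/(p-2j)^m=(-2j)^{-m}(1-p/(2j))^{-m}$ and the binomial series,
$$\frac{1}{(p-2j)^m}\equiv\frac{(-1)^m}{(2j)^m}\left(1+\frac{mp}{2j}+\binom{m+1}{2}\frac{p^2}{4j^2}\right)\pmod{p^3}.$$
Multiplying the expansions with $m=r,j=b$ and $m=s,j=a$ and keeping terms up to $p^2$ yields a sum of six products $1/(b^{r+\alpha}a^{s+\beta})$ with explicit rational coefficients in $p$.

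The last step is to recognize each of the six double sums $\sum_{1\le a<b\le n}1/(b^{r+\alpha}a^{s+\beta})$ as the multiple harmonic sum $H_n(s+\beta,r+\alpha)$ (inner variable first, in accordance with the paper's convention $H_n(a_1,a_2)=\sum_{k_1<k_2}1/(k_1^{a_1}k_2^{a_2})$). Collecting these six terms with the prefactor $(-1)^{r+s}/2^{r+s}=1/(-2)^{r+s}$ gives exactly the claimed right-hand side. The main thing to be careful about is the bookkeeping: the swap between $k_1<k_2$ and $a<b$ interchanges the roles of the exponents $r$ and $s$ (this is why the surviving sums are $H_n(s,\cdot)$ and $H_n(\cdot,r)$ rather than $H_n(r,\cdot)$), and one must check that the binomial coefficients $\binom{r+1}{2}$, $rs$, $\binom{s+1}{2}$ come out attached to the correct sums. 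No truncation error beyond $p^3$ appears because the leading factor $1/(ab)$ already has no pole at the prime.
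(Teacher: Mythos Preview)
Your proposal is correct and follows essentially the same approach as the paper. The paper's proof is extremely terse: it records the identity
\[
\HH_n(r,s)=\sum_{0<j<i\le n}\frac{1}{(2(n-i)+1)^r(2(n-j)+1)^s}
=\frac{1}{(-2)^{r+s}}\sum_{0<j<i\le n}\frac{1}{j^s i^r\bigl(1-\tfrac{p}{2i}\bigr)^r\bigl(1-\tfrac{p}{2j}\bigr)^s}
\]
(obtained exactly via your reindexing $k\mapsto n-k$) and then says only that the congruence follows by expanding in powers of $p$; you have carried out precisely that expansion and the bookkeeping you flag is handled correctly.
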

\begin{proof} It is easily seen that
$$H_{2n}(r)=\sum_{k=0}^{n-1}\frac{1}{(2k+1)^r}+
\sum_{k=1}^{n}\frac{1}{(2k)^r}=\HH_n(r)+{H_n(r)\over 2^r}.$$
The required congruence follows by expanding with respect to the powers of $p$ the identity
$$
\HH_n(r,s)=\sum_{0<j<i\leq n}{1\over (2(n-i)+1)^r(2(n-j)+1)^s}=
{1\over(-2)^{r+s}}\sum_{0<j<i\leq n}{1\over j^s i^r\left(1-{p\over 2i}\right)^r\left(1-{p\over 2j}\right)^s}$$
where $n=(p-1)/2$.
\end{proof}

\begin{lemma} \label{l3} For any prime $p>5,$
\begin{align}\nonumber
2(-1)^{{p-1\over 2}}\sum_{k=0}^{{p-3\over 2}}{(-1)^k\over 2k+1}
&\equiv   \HH_{{p-1\over 2}}(1) -p\HH_{{p-1\over 2}}(2)-p^2\HH_{{p-1\over 2}}(2,1)\\\label{alts}
&\qquad+p^3\HH_{{p-1\over 2}}(2,2) +p^4\HH_{{p-1\over 2}}(2,2,1) \pmod{p^{5}}.
\end{align}
\end{lemma}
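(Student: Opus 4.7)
The plan is to reduce both sides of the congruence to the same polynomial expression in $p$ (modulo $p^5$) whose coefficients are ordinary multiple harmonic sums, and then verify equality using the results of Section \ref{Section2}. For the LHS I apply the reflection $k\mapsto n-1-k$ (with $n=(p-1)/2$), which sends $2k+1$ to $p-2\ell$ (where $\ell=n-k$); this yields the exact identity
\begin{equation*}
(-1)^n\sum_{k=0}^{n-1}\frac{(-1)^k}{2k+1}=\sum_{\ell=1}^n\frac{(-1)^\ell}{p-2\ell}.
\end{equation*}
Taylor-expanding $(p-2\ell)^{-1}=-\sum_{m\ge 0}p^m/(2\ell)^{m+1}$ $p$-adically and interchanging the two sums gives, modulo $p^5$,
\begin{equation*}
2(-1)^n\sum_{k=0}^{n-1}\frac{(-1)^k}{2k+1}\equiv -\sum_{m=0}^4\frac{p^m}{2^m}A_n(m+1)\pmod{p^5},
\end{equation*}
where $A_n(r):=\sum_{j=1}^n(-1)^j/j^r$ is the alternating harmonic sum.

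For the RHS I expand each $\HH$-sum in the same spirit. Using $\HH_n(r)=\sum_{\ell=1}^n(p-2\ell)^{-r}$ and the analogous multi-sum representations obtained in the proof of Lemma \ref{lemmaHH}, I Taylor-expand each factor $(p-2\ell)^{-r}$ in powers of $p$ and truncate modulo $p^5$. This produces explicit polynomial expressions for $\HH_n(1)$, $p\HH_n(2)$, $p^2\HH_n(2,1)$, $p^3\HH_n(2,2)$, and $p^4\HH_n(2,2,1)$ whose coefficients are (multiple) $H_n$-sums of total weight at most five.

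To match the two sides I convert each $A_n(r)$ into ordinary $H$-sums. The substitution $j\mapsto p-j$ inside $\{1,\ldots,p-1\}$ gives the exact identity
\begin{equation*}
\sum_{j=1}^{p-1}\frac{(-1)^{j-1}}{j^r}=-A_n(r)+(-1)^r\sum_{m\ge 0}\binom{r+m-1}{m}p^m A_n(r+m),
\end{equation*}
which recursively expresses $A_n(r)$ modulo powers of $p$ in terms of $H_{p-1}$- and $H_n$-sums. After these reductions, both sides become polynomials in $p$ with coefficients in the ring generated by the (multiple) $H_n$- and $H_{p-1}$-sums, the Fermat quotient $q_p(2)$, and the Bernoulli numbers $B_{p-3},B_{p-5}$. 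Equality then follows from the congruences (i)--(vi), Lemma \ref{Ldue}, Theorem \ref{T21}, and \eqref{Hp2}--\eqref{unoduepiuunotre}, together with the shuffle product relations.

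The main obstacle lies in matching the highest order $p^4$. The coefficient of $p^4$ on the RHS receives contributions from all five $\HH$-terms (each expanded to the appropriate depth), while on the LHS it comes from $A_n(5)$ together with the higher-order expansions of $A_n(r)$ for $r\le 4$. Reconciling these contributions requires the full strength of Theorem \ref{T21}, the mod-$p^3$ congruence \eqref{unoduepiuunotre}, the mod-$p$ formula (iii) applied to the triple sum $H_n(1,2,2)$, and careful tracking of the Bernoulli number coefficients.
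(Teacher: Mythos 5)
Your overall strategy --- expand both sides $p$-adically after the reflection $2k+1\mapsto p-2\ell$ and match coefficients --- is workable in principle, but as written it has two genuine gaps. First, your claim that the identity obtained from $j\mapsto p-j$ ``recursively expresses $A_n(r)$ modulo powers of $p$ in terms of $H_{p-1}$- and $H_n$-sums'' is false for even $r$: in that identity the term $(-1)^r\binom{r-1}{0}A_n(r)$ has sign $+A_n(r)$, so it cancels the $-A_n(r)$ on the other side and the relation degenerates into a constraint on $A_n(r+1),A_n(r+2),\dots$ only. Thus $A_n(2)\bmod p^4$ and $A_n(4)\bmod p^2$, which your left-hand expansion requires, are \emph{not} determined this way; indeed $A_n(2)=\tfrac12 H_{\lfloor p/4\rfloor}(2)-H_{\frac{p-1}{2}}(2)$ involves Euler numbers (e.g.\ $E_{p-3}$) and cannot be reduced to the Bernoulli-type data (i)--(vi) at all. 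The proof can still succeed, but only because in the specific combination $\sum_{m=0}^4 (p/2)^m A_n(m+1)$ the even-indexed sums cancel identically against the contributions coming from the $r=1$ and $r=3$ relations; you neither observe nor verify this cancellation, and without it the plan stalls. Second, the term $p^4\HH_{\frac{p-1}{2}}(2,2,1)$ forces you to evaluate the depth-three half-range sum $H_{\frac{p-1}{2}}(1,2,2)\bmod p$, and the result you invoke, (iii), applies to $H_{p-1}(r,s,t)$, not to $H_{\frac{p-1}{2}}(r,s,t)$; passing from the full range to the half range at depth three requires an eight-region decomposition plus shuffle relations that is nowhere in the paper (Lemma~\ref{Ldue} only covers depth two). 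Beyond these two points, the proposal defers all of the actual bookkeeping at order $p^4$, which is where the content lies.

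For comparison, the paper's proof avoids every one of these issues: it starts from two WZ-certified identities for $\sum_k (-16)^k\binom{n+k}{2k}\big/\bigl((2k+1)^i\binom{2k}{k}\bigr)$, $i=1,2$, and uses the exact product formula \eqref{CCC} to rewrite $(-16)^k\binom{n+k}{2k}\binom{2k}{k}^{-1}$ as $\sum_j(-1)^jp^{2j}\HH_k(\{2\}^j)$. Subtracting $p$ times the second identity from the first makes the $\bigl(1-(-1)^n4^{p-1}\binom{2n}{n}^{-1}\bigr)/p$ terms cancel, and \eqref{alts} drops out modulo $p^5$ (in fact the intermediate congruences hold modulo $p^6$) with no evaluation of any harmonic sum whatsoever. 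In other words, the lemma is essentially an algebraic identity in the paper's treatment, whereas your route converts it into a hard analytic matching problem that needs strictly more input than Section~\ref{Section2} provides.
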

\begin{proof} The following identities can be easily proved by the WZ method:
$$\sum_{k=0}^n{(-16)^k{n+k\choose 2k}\over (2k+1){2k\choose k}}
=2(-1)^{n}\sum_{k=0}^{n-1}{(-1)^k\over 2k+1}+{1\over 2n+1}\,
\quad\mbox{and}\quad
\sum_{k=0}^n{(-16)^k{n+k\choose 2k}\over (2k+1)^2{2k\choose k}}={1\over (2n+1)^2}.$$
Let $n=(p-1)/2$. Notice that for $k\in\{0,1,\ldots,n\}$,
\begin{equation}\label{CCC}
{(-16)^k{n+k\choose 2k}\over {2k\choose k}}=\prod_{j=0}^{k-1}\left(1-{p^2\over (2j+1)^2}\right)=
\sum_{j=0}^{k} (-1)^jp^{2j}\,\HH_k(\{2\}^j).
\end{equation}
Thus, the two identities yield
\begin{align}
\HH_n(1)-p^2\HH_n(2,1)+p^4\HH_n(2,2,1)&\equiv 2(-1)^{n}\sum_{k=0}^{n-1}{(-1)^k\over 2k+1}+{1-(-1)^n 4^{p-1}{2n\choose n}^{-1}\over p}\pmod{p^{6}},
\nonumber \\
\HH_n(2)-p^2\HH_n(2,2)+p^4\HH_n(2,2,2)&\equiv {1-(-1)^n 4^{p-1}{2n\choose n}^{-1}\over p^2}\pmod{p^{6}}, \label{eq20.55}
\end{align}
and, by subtracting $p$ times the second congruence from the first one, we get \eqref{alts}.
\end{proof}
It is interesting to note that starting from formula \eqref{eq20.55} and then by using
Lemma~\ref{lemmaHH}, \eqref{C2}, \eqref{Hp2}, \eqref{H2}, (i) and (vi), we easily get a generalization of Morley's congruence~\cite{Ca:53}.
\begin{corollary}
For any prime $p>5,$
$$
\frac{(-1)^{(p-1)/2}}{4^{p-1}}\binom{p-1}{\frac{p-1}{2}}\equiv 1-\frac{1}{4}\, p\, H_{p-1}(1)-
\frac{1}{80}\, p^5 B_{p-5} \pmod{p^6}.
$$
\end{corollary}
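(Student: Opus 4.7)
The plan is to start from formula (\ref{eq20.55}), which with $n=(p-1)/2$ reads
$$
\HH_n(2)-p^2\HH_n(2,2)+p^4\HH_n(2,2,2) \equiv \frac{1-(-1)^n 4^{p-1}\binom{2n}{n}^{-1}}{p^2} \pmod{p^6}.
$$
Multiplying by $p^2$ and solving for the central binomial coefficient gives
$$
(-1)^n 4^{p-1}\binom{2n}{n}^{-1} \equiv 1 - X \pmod{p^8},
$$
where $X:=p^2\HH_n(2)-p^4\HH_n(2,2)+p^6\HH_n(2,2,2)$ is divisible by $p^2$. Inverting modulo $p^6$ yields $(1-X)^{-1}\equiv 1+X+X^2\pmod{p^6}$ with $X^2\equiv p^4\HH_n(2)^2\pmod{p^6}$, and the shuffle identity $\HH_n(2)^2=2\HH_n(2,2)+\HH_n(4)$ then collapses this to
$$
\frac{(-1)^n}{4^{p-1}}\binom{2n}{n} \equiv 1 + p^2\HH_n(2) + p^4\HH_n(2,2) + p^4\HH_n(4) \pmod{p^6}.
$$

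Next I evaluate each of the three $\HH_n$-sums via Lemma~\ref{lemmaHH}. For the single-index case $\HH_n(r)=H_{p-1}(r)-H_n(r)/2^r$, applying $r=2$ with the sharp congruences (\ref{Hp2}) and (\ref{H2}) makes the coefficients of $H_{p-1}(1)/p$ combine to $-\tfrac{1}{4}$ and those of $p^3 B_{p-5}$ to $-\tfrac{1}{40}$, giving $\HH_n(2)\equiv -\tfrac{1}{4p}H_{p-1}(1)-\tfrac{1}{40}p^3 B_{p-5}\pmod{p^4}$; applying $r=4$ with (i) and (vi) gives $\HH_n(4)\equiv\tfrac{1}{40}\,p B_{p-5}\pmod{p^2}$. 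For the double sum, Lemma~\ref{lemmaHH} reduces $\HH_n(2,2)$ mod $p^2$ to $\tfrac{1}{16}\bigl(H_n(2,2)+p(H_n(2,3)+H_n(3,2))\bigr)$. Here $H_n(2,2)$ mod $p^2$ follows from the shuffle $H_n(2)^2=2H_n(2,2)+H_n(4)$ together with (vi) (which forces $H_n(2)\equiv 0\pmod{p}$, so $H_n(2,2)\equiv-\tfrac{1}{2}H_n(4)\equiv-\tfrac{31}{5}\,p B_{p-5}\pmod{p^2}$), while $H_n(2,3)+H_n(3,2)\equiv 6 B_{p-5}\pmod{p}$ either by applying (\ref{C2}) twice or by combining (vi) with the shuffle $H_n(2)H_n(3)=H_n(2,3)+H_n(3,2)+H_n(5)$. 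Combining these gives $\HH_n(2,2)\equiv -\tfrac{1}{80}\,p B_{p-5}\pmod{p^2}$.

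Substituting into the expansion of $(-1)^n\binom{2n}{n}/4^{p-1}$, the $H_{p-1}(1)$-piece comes out to $-\tfrac{1}{4}p H_{p-1}(1)$ from $p^2\HH_n(2)$ alone, while the Bernoulli piece is $(-\tfrac{1}{40}-\tfrac{1}{80}+\tfrac{1}{40})\,p^5 B_{p-5}=-\tfrac{1}{80}\,p^5 B_{p-5}$, reproducing the claimed right-hand side. The main obstacle is accurately tracking the $p$-adic precision needed for each summand: $\HH_n(2)$ must be known mod $p^4$, which is exactly why the refined congruences (\ref{Hp2}) and (\ref{H2}) (themselves relying on Theorem~\ref{T21} and item (iv)) are indispensable, whereas $\HH_n(2,2)$ and $\HH_n(4)$ enter only with a factor $p^4$ and so are needed only mod $p^2$, making the first-order expansion in Lemma~\ref{lemmaHH} sufficient. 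The delicate cancellation of the $\pm\tfrac{1}{40}\,p^5 B_{p-5}$ terms between $p^2\HH_n(2)$ and $p^4\HH_n(4)$, leaving $\HH_n(2,2)$ to supply the final $-\tfrac{1}{80}\,p^5 B_{p-5}$, is the key verification that the coefficient $-\tfrac{1}{80}$ in the corollary is correct.
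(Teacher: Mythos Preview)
Your proof is correct and follows precisely the route the paper sketches: starting from formula~\eqref{eq20.55}, inverting to isolate $(-1)^n 4^{-(p-1)}\binom{2n}{n}$, and then evaluating the resulting $\HH_n$-sums by means of Lemma~\ref{lemmaHH}, the shuffle product, \eqref{C2}, \eqref{Hp2}, \eqref{H2}, (i) and (vi). The paper does not write out these computations but names exactly this list of ingredients, and your detailed tracking of the $p$-adic precision (in particular that $\HH_n(2)$ is needed mod $p^4$ while $\HH_n(2,2)$ and $\HH_n(4)$ suffice mod $p^2$) is just what is required to make that sketch into a complete argument.
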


\section{Polynomial congruences}

\label{Section3}

For a non-negative integer $n$ we define the sequence
\begin{equation}
w_n(x):=(2n+1)F(-n,n+1;3/2;(1-x)/2)=\frac{n!}{(1/2)_n} P_n^{(1/2, -1/2)}(x).
\label{eq07}
\end{equation}
Using  recurrence relation (\ref{eq02}) it is easy to show that the sequence $w_n(x)$
satisfies the second order linear recurrence with constant coefficients:
$$
w_{n+1}(x)=2xw_n(x)-w_{n-1}(x)
$$
and initial conditions $w_0(x)=1,$ $w_1(x)=1+2x$.
This implies that $w_n(x)$ has the generating function
$$
W_x(z)=\sum_{n=0}^{\infty} w_n(x)z^n=\frac{1+z}{1-2xz+z^2},
$$
which yields
\begin{equation}
w_n(x)=u_{n+1}(2x)+u_n(2x)
\label{eq08b}
\end{equation}
and
\begin{equation}
w_n(x)=\begin{cases}\ds
\frac{(\alpha+1)\alpha^n-(\alpha^{-1}+1)\alpha^{-n}}{\alpha-\alpha^{-1}}
& \quad \mbox{if $x \neq \pm 1$}, \\[3pt]
2n+1 & \quad \mbox{if $x=1$}, \\
(-1)^n & \quad  \mbox{if $x=-1$},
\end{cases}
\label{eq08}
\end{equation}
where $\alpha=x+\sqrt{x^2-1}$. Note that for   $x\in (-1,1)$
we also have the alternative representation:
\begin{equation} \label{eq23.55}
w_n(x)=\cos(n\arccos x)+\frac{x+1}{\sqrt{1-x^2}}\sin(n\arccos x).
\end{equation}

\begin{lemma} Let $p>3$ be a prime and $t=a/b,$ where $a, b$ are integers coprime to $p.$ Then
\begin{align}
&\sum_{k=0}^{(p-3)/2}\frac{\binom{2k}{k}t^k}{2k+1}\,\HH_k(2)
\equiv \frac{1}{64}\left(\frac{-1}{\, t}\right)^{\frac{p+1}{2}}\sum_{k=1}^{p-1}
\frac{v_k(2-16t)}{k^3} \pmod{p}, \label{A2} \\
&\sum_{k=0}^{(p-1)/2}t^k\binom{2k}{k}\HH_k(2)
\equiv \frac{1}{2}\left(
\frac{-1}{\, t}\right)^{\frac{p-1}{2}}\sum_{k=1}^{p-1}
\frac{u_k(2-16t)}{k^2} \pmod{p}. \label{A3}
\end{align}
\end{lemma}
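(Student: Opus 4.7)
The strategy is to reduce both congruences to known Mattarei--Tauraso results from \cite{MT:11} for sums of the form $\sum_{k=1}^{p-1} t^{p-k} H_k(2) \binom{2k}{k}/k^d \pmod p$ (with $d = 0$ for \eqref{A3} and $d = 1$ for \eqref{A2}). The bridge is the reversal substitution $k \mapsto n - k$ with $n = (p-1)/2$, combined with two elementary symmetries modulo $p$ that convert $\HH_{k}(2)$ into $H_k(2)$ and $\binom{2(n-k)}{n-k}$ into a rescaling of $\binom{2k}{k}$.

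First, I would establish the two auxiliary congruences modulo $p$:
\[
\binom{2(n-k)}{n-k} \equiv (-1)^n \, 4^{n-2k}\binom{2k}{k},
\qquad
\HH_{n-k}(2) \equiv -\tfrac{1}{4}H_k(2).
\]
The first follows from the standard congruence $\binom{2k}{k} \equiv (-4)^k\binom{n}{k} \pmod p$ (a consequence of $-1/2 \equiv n \pmod p$) together with the symmetry $\binom{n}{k} = \binom{n}{n-k}$. The second follows from two observations: (a) $\HH_n(2) \equiv 0 \pmod p$, which is a consequence of Lemma \ref{lemmaHH} (giving $\HH_n(2) = H_{p-1}(2) - H_n(2)/4$) together with the vanishing mod $p$ of $H_{p-1}(2)$ and $H_n(2)$ by items (i) and (vi); (b) $2j+1 \equiv -2(n-j) \pmod p$ for $0 \leq j < n$, which, upon changing the index of summation, gives $\HH_n(2) - \HH_{n-k}(2) \equiv \tfrac{1}{4}H_k(2) \pmod p$.

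Next, I would apply the substitution $k \mapsto n-k$ to the left-hand side of \eqref{A3}, extending the summation range up to $k = p-1$ (since $\binom{2k}{k} \equiv 0 \pmod p$ for $n<k<p$). The outcome is
\[
\sum_{k=0}^{(p-1)/2} t^k \binom{2k}{k}\HH_k(2) \;\equiv\; (-1)^{n+1} \, 4^{n-1} \, t^n \sum_{k=1}^{p-1}\frac{\binom{2k}{k} H_k(2)}{(16t)^k} \pmod p.
\]
Invoking the $d = 0$ congruence from \cite{MT:11} to rewrite the right-hand sum in terms of $\sum_{k=1}^{p-1} u_k(2-16t)/k^2$, and simplifying the prefactor by means of $4^{p-1} \equiv t^{p-1} \equiv 1 \pmod p$, recovers the stated right-hand side of \eqref{A3}. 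For \eqref{A2} the argument is entirely analogous: one additionally rewrites $1/(2k+1) \equiv -1/(2(n-k)) \pmod p$ \emph{before} the substitution, which yields an extra factor $1/k$ after reversal, and one applies instead the $d = 1$ case of the Mattarei--Tauraso congruences to produce $\sum_{k=1}^{p-1} v_k(2-16t)/k^3$ on the right.

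The main obstacle is bookkeeping the constants and verifying that the required congruences appear in \cite{MT:11} in precisely the form needed (with the prefactors $-2$ and $-2s$ respectively, and with the Chebyshev argument $2-1/s = 2-16t$). If the MT formulas are stated with slightly different normalizations, one can derive the needed variants directly by writing $v_k(2-16t) = \alpha^k + \alpha^{-k}$ and $u_k(2-16t) = (\alpha^k-\alpha^{-k})/(\alpha-\alpha^{-1})$ for $\alpha$ with $\alpha + \alpha^{-1} = 2-16t$, and manipulating the truncated polylogarithm sums $\sum_{k=1}^{p-1}\alpha^{\pm k}/k^d \pmod p$.
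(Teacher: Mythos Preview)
Your proposal is correct and follows essentially the same route as the paper: reverse the summation via $k\mapsto n-k$, use $\binom{2(n-k)}{n-k}\equiv(-1)^n16^{-k}\binom{2k}{k}$ and $\HH_{n-k}(2)\equiv-\tfrac14 H_k(2)\pmod p$, extend the range to $p-1$, and invoke the Mattarei--Tauraso congruences from \cite[Section~6]{MT:11}. The only cosmetic difference is that the paper obtains the binomial congruence by passing through $\binom{n+k}{2k}$ and \eqref{CCC} rather than through $\binom{2k}{k}\equiv(-4)^k\binom{n}{k}$, but the two derivations are equivalent.
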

\begin{proof}
Let $n=(p-1)/2$. We reverse the order of summation over $k$:
$$
\sum_{k=0}^{n-1}\frac{\binom{2k}{k} t^k}{2k+1} \,\HH_k(2)
=\sum_{k=1}^{n}\frac{\binom{2n-2k}{n-k} t^{n-k}}{p-2k}\,\HH_{n-k}(2)
$$
and note that by \eqref{CCC}, for $k\in\{0,1,\ldots,n\},$
\begin{align*}
\binom{2n-2k}{n-k}&=\frac{(p-1-2k)(p-2-2k)\cdots(p-(n+k))}{(n-k)!} \\
&\equiv (-1)^{n-k}\frac{(2k+1)(2k+2)\cdots (n+k)}{(n-k)!}\\
&=(-1)^{n-k}\binom{n+k}{2k}\equiv (-1)^n\frac{\binom{2k}{k}}{16^k} \pmod{p}
\end{align*}
and by (vi)
$$
\HH_{n-k}(2)=\sum_{j=k+1}^{n}\frac{1}{(2(n-j)+1)^2}=
\sum_{j=k+1}^{n}\frac{1}{(p-2j)^2}\equiv
\frac{H_{n}(2)-H_k(2)}{4}\equiv -\frac{H_k(2)}{4} \pmod{p}.
$$
Hence we obtain
$$\sum_{k=0}^{n-1}\frac{\binom{2k}{k} t^k}{2k+1} \,\HH_k(2)
\equiv
\frac{(-t)^{n}}{8}\sum_{k=1}^{n}\frac{\binom{2k}{k}H_k(2)}{k (16t)^k}
\equiv \frac{(-t)^{n}}{8}\sum_{k=1}^{p-1}\frac{\binom{2k}{k}H_k(2)}
{k (16t)^k} \pmod{p}.
$$
Similarly, we have
$$
\sum_{k=0}^nt^k\binom{2k}{k}\HH_k(2)\equiv
-\frac{(-t)^n}{4}\sum_{k=1}^{p-1}\frac{\binom{2k}{k}H_k(2)}{(16t)^k} \pmod{p}.
$$
Now the desired congruences follow from the fact that \cite[Section~6]{MT:11}:
$$\sum_{k=1}^{p-1}\frac{\binom{2k}{k} t^{p-k}H_k(2)}{k}
\equiv -2\sum_{k=1}^{p-1}
\frac{v_k(2-t)}{k^3} \pmod{p}
$$
and
$$
\sum_{k=1}^{p-1}\binom{2k}{k}t^{p-k}H_k(2)\equiv
-2t\sum_{k=1}^{p-1}\frac{u_k(2-t)}{k^2} \pmod{p}.
$$
\end{proof}
The next theorem gives us polynomial congruences for sums (\ref{first}) in the case $d=0.$
Among previous results on the second sum, we mention work \cite{Sunzwfib} where Z.~W.~Sun  determined the value of
$\sum_{k=0}^{(p-1)/2}\frac{\binom{2k}{k}}{m^k}$  $\pmod{p^2}$ for any integer $m\not\equiv 0\pmod{p}$
in terms of the Lucas sequences $u_{p\pm 1}(4,m).$
\begin{theorem} \label{t1}
Let $p$ be an odd prime and $t=a/b,$ where $a, b$ are integers coprime to $p.$ Then the following congruences are true:
\begin{align*}
&\sum_{k=0}^{(p-3)/2}\frac{\binom{2k}{k}t^k}{2k+1}\equiv
\frac{w_{\frac{p-1}{2}}(1-8t)-(-16t)^{\frac{p-1}{2}}}{p}
+\frac{p^2}{64}\left(\frac{-1}{\,t}\right)^{\frac{p+1}{2}}\sum_{k=1}^{p-1}
\frac{v_k(2-16t)}{k^3} \pmod{p^3}, \\
(-1)^{\frac{p-1}{2}}&\sum_{k=0}^{(p-1)/2}\binom{2k}{k}t^k\equiv
w_{\frac{p-1}{2}}(8t-1)+\frac{p^2}{2t^{\frac{p-1}{2}}}\sum_{k=1}^{p-1}
\frac{u_k(2-16t)}{k^2} \pmod{p^3}.
\end{align*}
\end{theorem}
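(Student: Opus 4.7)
The strategy is to apply the product identity~(\ref{CCC})
$$(-16)^k\binom{n+k}{2k}\Big/\binom{2k}{k}=1-p^2\HH_k(2)+p^4\HH_k(2,2)-\cdots$$
with $n=(p-1)/2$, so that the ``unperturbed'' combinatorial sums evaluate as closed-form polynomials $w_n(\pm(1-8t))$ while the $\HH_k(2)$-weighted correction is handled modulo $p$ using (\ref{A2}) and (\ref{A3}).

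\textbf{First congruence.} From the hypergeometric form (\ref{eq07}) at $x=1-8t$, a direct calculation with $\frac{(-n)_k(n+1)_k}{(3/2)_k\,k!}=\frac{(-1)^k 4^k\binom{n+k}{2k}}{2k+1}$ yields the polynomial identity
\begin{equation*}
w_n(1-8t)=(2n+1)\sum_{k=0}^{n}\frac{(-16t)^k\binom{n+k}{2k}}{2k+1}.
\end{equation*}
With $n=(p-1)/2$ so that $2n+1=p$, the $k=n$ term equals $(-16t)^n$, giving the exact equality
\begin{equation*}
\frac{w_n(1-8t)-(-16t)^n}{p}=\sum_{k=0}^{n-1}\frac{(-16t)^k\binom{n+k}{2k}}{2k+1}.
\end{equation*}
Inserting (\ref{CCC}) and truncating modulo $p^3$ (the $p^4$ and higher tail is harmless since all denominators $(2j+1)$ are coprime to $p$ for $j<n$) reduces the right-hand side to
\begin{equation*}
\sum_{k=0}^{n-1}\frac{\binom{2k}{k}t^k}{2k+1}-p^2\sum_{k=0}^{n-1}\frac{\binom{2k}{k}t^k\HH_k(2)}{2k+1},
\end{equation*}
and the $\HH_k(2)$-weighted sum, needed only modulo $p$, is exactly the quantity evaluated by (\ref{A2}).

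\textbf{Second congruence.} The corresponding polynomial identity is
\begin{equation*}
(-1)^n w_n(8t-1)=\sum_{k=0}^{n}(-16t)^k\binom{n+k}{2k}.
\end{equation*}
I would establish it by expanding the companion Jacobi polynomial $P_n^{(-1/2,1/2)}(1-8t)$ via the hypergeometric series using $\frac{(-n)_k(n+1)_k}{(1/2)_k\,k!}=(-1)^k 4^k\binom{n+k}{2k}$, which gives $\frac{n!}{(1/2)_n}P_n^{(-1/2,1/2)}(1-8t)=\sum_{k=0}^n(-16t)^k\binom{n+k}{2k}$; the Jacobi reflection $P_n^{(\alpha,\beta)}(-x)=(-1)^n P_n^{(\beta,\alpha)}(x)$ together with (\ref{eq07}) then identifies the left-hand side with $(-1)^n w_n(8t-1)$. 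Applying (\ref{CCC}) again (no $k=n$ term needs to be split off this time) yields modulo $p^3$
\begin{equation*}
(-1)^n w_n(8t-1)\equiv\sum_{k=0}^{n}\binom{2k}{k}t^k-p^2\sum_{k=0}^{n}\binom{2k}{k}t^k\HH_k(2),
\end{equation*}
and (\ref{A3}) evaluates the correction modulo $p$. Multiplying through by $(-1)^n$ (which converts the $(-1/t)^n$ in (\ref{A3}) into $1/t^n$, matching the claim) produces the stated form.

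\textbf{Main obstacle.} The only step outside of routine substitution is securing the second polynomial identity $(-1)^n w_n(8t-1)=\sum_{k=0}^n(-16t)^k\binom{n+k}{2k}$, since here no factor of $(2n+1)=p$ is available to separate off the top term as in the first case. The Jacobi reflection provides a clean derivation, but one could equally verify both sides satisfy the Chebyshev-type recurrence $y_{n+1}=2(8t-1)y_n-y_{n-1}$ using the generating function $\sum_{n\ge k}\binom{n+k}{2k}z^n=z^k/(1-z)^{2k+1}$ and comparing initial values.
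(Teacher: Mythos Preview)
Your proposal is correct and follows essentially the same route as the paper: both proofs rewrite $w_n(1-8t)$ and $(-1)^n w_n(8t-1)$ as finite hypergeometric sums (the paper via the product form $\prod_{j=0}^{k-1}(1-p^2/(2j+1)^2)$, you via the equivalent $(-16)^k\binom{n+k}{2k}$), isolate the $k=n$ term in the first case, expand using (\ref{CCC}), and invoke (\ref{A2}) and (\ref{A3}) for the $\HH_k(2)$ correction. Your handling of the second identity through the Jacobi reflection $P_n^{(-1/2,1/2)}(-x)=(-1)^nP_n^{(1/2,-1/2)}(x)$ matches the paper's derivation of (\ref{eq17}) exactly.
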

\begin{proof}
Setting $n=(p-1)/2$ in (\ref{eq07})  we get
$$
w_{n}(x)=p\cdot F\left(\frac{1-p}{2}, \frac{1+p}{2}; \frac{3}{2}; \frac{1-x}{2}\right).
$$
Observing that
\begin{equation}
\left(\frac{1-p}{2}\right)_k\left(\frac{1+p}{2}\right)_k=
\frac{(2k)!^2}{16^k k!^2}
\prod_{j=0}^{k-1}
\left(1-\frac{p^2}{(2j+1)^2}\right) \quad\text{and}\quad
\left(\frac{3}{2}\right)_k=\frac{(2k+1)!}{4^k k!},
\label{eq09}
\end{equation}
we get the following identity:
\begin{equation}
w_{n}(x)=p\sum_{k=0}^{n}\frac{\binom{2k}{k}}{2k+1}\left(\frac{1-x}{8}\right)^k
\prod_{j=0}^{k-1}\left(1-\frac{p^2}{(2j+1)^2}\right).
\label{eq10}
\end{equation}
Putting $t=(1-x)/8$ in (\ref{eq10}) we have
$$
w_{n}(1-8t)= p\sum_{k=0}^{n-1}\frac{\binom{2k}{k} t^k}{2k+1}
\sum_{j=0}^{k} (-1)^jp^{2j}\,\HH_k(\{2\}^j)+
\binom{2n}{n}t^{n}\prod_{j=0}^{n-1}\left(1-\frac{p^2}{(2j+1)^2}\right).
$$
Using \eqref{CCC} for $k=n$ we obtain
\begin{equation}\label{A}
\frac{w_{n}(1-8t)-(-16t)^{n}}{p}= \sum_{k=0}^{n-1}\frac{\binom{2k}{k}t^k}{2k+1}
\sum_{j=0}^{k} (-1)^jp^{2j}\,\HH_k(\{2\}^j).
\end{equation}
Lastly, by considering the above equality   modulo $p^3$
and by using \eqref{A2}, we get the first congruence of the theorem.
To prove the second one, we apply the well-known symmetry property of the Jacobi polynomials
$$
P_n^{(\alpha, \beta)}(x)=(-1)^n P_n^{(\beta, \alpha)}(-x).
$$
Then from the definition of $w_n(x)$ we get
\begin{equation}
w_{\frac{p-1}{2}}(x)=\frac{(-1)^{\frac{p-1}{2}}(\frac{p-1}{2})!}{(1/2)_{\frac{p-1}{2}}}\,
P_{\frac{p-1}{2}}^{(-1/2, 1/2)}(-x)=(-1)^{\frac{p-1}{2}} F\left(\frac{1-p}{2},
\frac{1+p}{2}; \frac{1}{2}; \frac{1+x}{2}\right).
\label{eq17}
\end{equation}
Setting  $x=8t-1$, by \eqref{eq09}, we obtain
\begin{equation}
(-1)^{\frac{p-1}{2}}w_{\frac{p-1}{2}}(8t-1)=\sum_{k=0}^{(p-1)/2}\binom{2k}{k}
\prod_{j=1}^k\left(1-\frac{p^2}{(2j-1)^2}\right)t^k.
\label{eq15}
\end{equation}
Now the required congruence easily follows from (\ref{A3}).
\end{proof}

\begin{proposition} \label{p1}
For any non-negative integer $n$ we have
\begin{align}
\int_0^tw_n(1-\tau^2/2)\,d\tau&=2\sum_{k=0}^{n-1}
\frac{(-1)^k v_{2k+1}(t)}{2k+1}
+\frac{(-1)^n v_{2n+1}(t)}{2n+1}, \label{intu}\\
\int_0^t \frac{(-1)^nw_n(\tau^2/2-1)-1}{\tau}\,d\tau&=\sum_{k=1}^n\frac{(-1)^{k} v_{2k}(t)}{2k}
-H_n(1). \label{intu1}
\end{align}
\end{proposition}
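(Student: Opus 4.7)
The plan is to reduce both identities to elementary trigonometric calculations via the substitution $\tau=2\sin(\psi/2)$, and then extend to arbitrary $t$ by a polynomial degree argument. First I would observe that both sides of \eqref{intu} are polynomials in $t$: the integrand $w_n(1-\tau^2/2)$ is a polynomial in $\tau^2$ of degree $n$, so its antiderivative evaluated at $t$ has degree $2n+1$, matching the degree of the right-hand side. For \eqref{intu1}, since \eqref{eq08} yields $w_n(-1)=(-1)^n$, the numerator $(-1)^n w_n(\tau^2/2-1)-1$ vanishes at $\tau=0$ and is in fact divisible by $\tau^2$, so the integrand is polynomial in $\tau$ and the integral is polynomial in $t$ of degree $2n$. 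Consequently, it suffices to check each identity for $t\in[0,2]$, where one may parametrize $t=2\sin(\theta/2)$ with $\theta\in[0,\pi]$.

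The key ingredient is the Dirichlet-kernel closed form
\begin{equation*}
w_n(\cos\psi) = \frac{\sin((2n+1)\psi/2)}{\sin(\psi/2)} = 1 + 2\sum_{k=1}^n \cos(k\psi),
\end{equation*}
which follows directly from \eqref{eq23.55} or from \eqref{eq08} with $\alpha=e^{i\psi}$. Replacing $\psi$ by $\pi-\psi$ gives the companion formula $(-1)^n w_n(-\cos\psi) = \cos((2n+1)\psi/2)/\cos(\psi/2)$. Combining $v_m(x)=2T_m(x/2)$ with $T_m(\cos\alpha)=\cos(m\alpha)$ yields
\begin{equation*}
v_{2k+1}(2\sin(\theta/2)) = 2(-1)^k\sin((2k+1)\theta/2), \qquad v_{2k}(2\sin(\theta/2)) = 2(-1)^k\cos(k\theta),
\end{equation*}
which will identify the right-hand sides after the substitution.

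For \eqref{intu}, I would substitute $\tau=2\sin(\psi/2)$ (so $d\tau=\cos(\psi/2)\,d\psi$) to obtain $\int_0^\theta(1+2\sum_{k=1}^n\cos(k\psi))\cos(\psi/2)\,d\psi$. The product-to-sum formula $2\cos(k\psi)\cos(\psi/2)=\cos((2k-1)\psi/2)+\cos((2k+1)\psi/2)$ lets me integrate term by term, and the two resulting shifted sums reindex to produce exactly the right-hand side via the first Chebyshev identity above. For \eqref{intu1}, the same substitution converts $d\tau/\tau$ into $\tfrac{1}{2}\cot(\psi/2)\,d\psi$, and the integrand simplifies to $(\cos((2n+1)\psi/2)-\cos(\psi/2))/(2\sin(\psi/2))$. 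The telescoping identity $\cos((2n+1)\psi/2)-\cos(\psi/2)=-2\sin(\psi/2)\sum_{k=1}^n\sin(k\psi)$ collapses this to $-\sum_{k=1}^n\sin(k\psi)$; integrating from $0$ to $\theta$ produces $\sum_{k=1}^n(\cos(k\theta)-1)/k$, which matches the right-hand side via the second Chebyshev identity (the $-H_n(1)$ term arising from splitting off the constant $-1$ in each summand).

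The main obstacle is essentially bookkeeping rather than conceptual. In \eqref{intu} one must carefully merge the two shifted sums produced by the product-to-sum expansion and combine them with the contribution from the constant term $1$; the answer hinges on the $k=0$ and $k=n$ indices being treated asymmetrically, which accounts for the factor $2$ appearing in the middle of the right-hand side. An alternative route would be to differentiate both sides (using $v_m'(x)=m\,u_m(x)$) and prove the polynomial identity $w_n(1-t^2/2) = 2\sum_{k=0}^{n-1}(-1)^k u_{2k+1}(t) + (-1)^n u_{2n+1}(t)$ by induction on $n$ via the recurrence for $w_n$, but this is considerably more painful than the direct trigonometric computation because of the parity-dependent signs.
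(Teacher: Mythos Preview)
Your argument is correct and genuinely different from the paper's. The paper proceeds via generating functions: it rewrites $w_n(1-\tau^2/2)=u_{n+1}(2-\tau^2)+u_n(2-\tau^2)$ using \eqref{eq08b}, then evaluates $\int_0^t u_n(2-\tau^2)\,d\tau$ by inserting the generating function $U_{2-\tau^2}(-z^2)$, integrating in $\tau$ to land on a logarithmic expression, expanding that logarithm as $-\sum v_m(t)z^m/m$, and finally extracting the coefficient of $z^{2n}$. The second identity is handled the same way, with the intermediate fact $w_n(\tau^2/2-1)=u_{2n+1}(\tau)$ falling out of a generating-function comparison. By contrast, you bypass generating functions entirely: the substitution $\tau=2\sin(\psi/2)$ together with the Dirichlet-kernel form of $w_n(\cos\psi)$ reduces everything to elementary trigonometric integrals, and the polynomial-degree argument extends the result from $t\in[0,2]$ to all $t$. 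Your route is more elementary and arguably cleaner for these two specific identities; the paper's route is more uniform (no need for the analytic-continuation step) and yields the auxiliary identity $w_n(\tau^2/2-1)=u_{2n+1}(\tau)$ as a byproduct, which fits the generating-function toolkit used elsewhere in the paper.
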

\begin{proof}
From (\ref{eq08b}) it follows that
$$
w_n(1-\tau^2/2)=u_{n+1}(2-\tau^2)+u_n(2-\tau^2).
$$
Recall that the generating functions of the sequences $u_n(t)$ and $v_n(t)$ have the form:
$$
U_t(z)=\sum_{n=0}^{\infty}u_n(t)z^n=\frac{z}{z^2-t z+1} \quad\text{and}\quad
V_t(z)=\sum_{n=0}^{\infty}v_n(t)z^n=\frac{2-zt}{z^2-t z+1}.
$$
Integrating corresponding power series and comparing coefficients of powers $z^{2n}$ we have
\begin{align*}
\int_0^t u_{n}(2-\tau^2)\,d\tau
&=\left[z^{2n}\right](-1)^n\int_0^t U_{2-\tau^2}(-z^2)\,d\tau
=\left[z^{2n}\right](-1)^n\int_0^t \frac{-z^2\,d\tau}{z^4+(2-\tau^2) z^2+1} \\
&=\left[z^{2n}\right](-1)^n\int_0^t
\frac{-z^2\,d\tau}{(z^2+\tau z +1)(z^2-\tau z +1)} \\
&=\left[z^{2n}\right]\frac{(-1)^n z}{2(z^2+1)}\left(\log(z^2-tz+1)-\log(z^2+tz+1)\right).
\end{align*}
Since
$$\frac{d}{dz}(\log(z^2-tz+1))=\frac{2z-t}{z^2-tz+1}=\frac{2-V_t(z)}{z}=
-\sum_{n=1}^{\infty}v_n(t)\, z^{n-1}$$
it follows that
$$
\log(z^2-tz+1)=-\sum_{n=1}^{\infty}\frac{v_n(t)}{n}\, z^n
$$
and therefore,
$$
\log(z^2-tz+1)-\log(z^2+tz+1)=-2z\sum_{n=0}^{\infty}\frac{v_{2n+1}(t)}{2n+1}\, z^{2n}.
$$
Now  formula (\ref{intu}) easily follows by virtue of the following relations:
\begin{align*}
\int_0^t u_{n}(2-\tau^2)\,d\tau&=
\left[z^{2n}\right]\frac{(-1)^{n-1}z^2}{(z^2+1)}
\sum_{n=0}^{\infty}\frac{v_{2n+1}(t)}{2n+1}\, z^{2n}
=\left[(-z)^{n-1}\right]\frac{1}{(z+1)}
\sum_{n=0}^{\infty}\frac{v_{2n+1}(t)}{2n+1}\, z^{n}\\
&=\left[z^{n-1}\right]\frac{1}{(1-z)}
\sum_{n=0}^{\infty}\frac{(-1)^n v_{2n+1}(t)}{2n+1}\, z^{n}
=\sum_{k=0}^{n-1}
\frac{(-1)^k v_{2k+1}(t)}{2k+1}.
\end{align*}
To prove (\ref{intu1}), we note that
\begin{equation}
\begin{split}
\sum_{n=0}^{\infty}\frac{u_{2n+1}(\tau)-(-1)^n}{\tau}\, z^{2n}&=\frac{1}{\tau}
\left(\frac{U_{\tau}(z)-U_{\tau}(-z)}{2z}-\frac{1}{z^2+1}\right) \\
&=\frac{z}{2(z^2+1)}\left(\frac{1}{z^2-\tau z+1}-\frac{1}{z^2+\tau z+1}\right).
\label{eq27}
\end{split}
\end{equation}
Integrating with respect to $\tau$ and resembling  coefficients of $z^{2n}$ on both sides of
(\ref{eq27}) we get
\begin{equation*}
\begin{split}
\int_0^t&\frac{u_{2n+1}(\tau)-(-1)^n}{\tau}\,d\tau=\left[z^{2n}\right]
\frac{1}{z^2+1}\left(\log(1+z^2)-\frac{\log(z^2-tz+1)+\log(z^2+tz+1)}{2}\right) \\[2pt]
&=\left[z^{2n}\right]\frac{1}{z^2+1}\Bigl(\log(1+z^2)+\sum_{n=1}^{\infty}
\frac{v_{2n}(t)}{2n}z^{2n}\Bigr)
=\sum_{k=1}^n\frac{(-1)^{k+n}v_{2k}(t)}{2k}-(-1)^nH_n(1).
\end{split}
\end{equation*}
Comparing  generating functions $W_{\tau^2/2-1}(z^2)$ and
$(U_{\tau}(z)-U_{\tau}(-z))/(2z),$ we conclude that
$$
w_n(\tau^2/2-1)=u_{2n+1}(\tau),
$$
which completes the proof.
\end{proof}
\begin{theorem} \label{t5}
Let $p$ be an odd prime and $t=a/b,$ where $a, b$ are integers coprime to $p.$
Then we have the polynomial congruences
\begin{align*}
\sum_{k=0}^{(p-3)/2}\frac{\binom{2k}{k}}{(2k+1)^2}\left(\frac{t}{4}\right)^{2k}&\equiv
\frac{(-1)^{\frac{p-1}{2}}(v_p(t)-t^p)}{tp^2}+\frac{2}{tp}\sum_{k=0}^{(p-3)/2}
\frac{(-1)^k v_{2k+1}(t)}{2k+1} \pmod{p^2}, \\
\sum_{k=1}^{(p-1)/2}\frac{\binom{2k}{k}}{k}\left(\frac{t}{4}\right)^{2k} &\equiv
4q_p(2)-2p q_p^2(2)+\sum_{k=1}^{(p-1)/2}\frac{(-1)^k v_{2k}(t)}{k} \pmod{p^2}.
\end{align*}
\end{theorem}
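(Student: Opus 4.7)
The plan is to parallel the derivation of Theorem~\ref{t1} but with one extra integration step, so that the exponents in the denominator go up by one: what was $1/(2k+1)$ becomes $1/(2k+1)^2$, and what was $\binom{2k}{k}t^k$ becomes a sum with $1/k$. The formulas in Proposition~\ref{p1} are tailor-made to evaluate the integrals of $w_n(1\pm\tau^2/2)$ that arise.

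\textbf{First congruence.} I would start from identity \eqref{eq10} and substitute $s=\tau^2/16$ (so that $1-8s=1-\tau^2/2$), obtaining
$$w_n(1-\tau^2/2)=p\sum_{k=0}^{n}\frac{\binom{2k}{k}}{2k+1}\left(\frac{\tau}{4}\right)^{2k}\prod_{j=0}^{k-1}\left(1-\frac{p^2}{(2j+1)^2}\right),$$
where $n=(p-1)/2$. Integrating both sides with respect to $\tau$ from $0$ to $t$ and dividing by $pt$ gives
$$\frac{1}{pt}\int_0^t w_n(1-\tau^2/2)\,d\tau=\sum_{k=0}^{n}\frac{\binom{2k}{k}}{(2k+1)^2}\left(\frac{t}{4}\right)^{2k}\prod_{j=0}^{k-1}\left(1-\frac{p^2}{(2j+1)^2}\right).$$
The crucial step is to peel off the $k=n$ term. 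By \eqref{CCC} with $k=n$, one has $\binom{2n}{n}\prod_{j=0}^{n-1}(1-p^2/(2j+1)^2)=(-16)^n$, and $(2n+1)^2=p^2$, so that term contributes $(-1)^n t^p/(p^2 t)$. For $0\le k\le n-1$ the product is $\equiv 1\pmod{p^2}$ because $\HH_k(2)$ is a $p$-integer, so the remaining sum is congruent mod $p^2$ to the target. Finally, Proposition~\ref{p1}, equation~\eqref{intu}, evaluates $\int_0^t w_n(1-\tau^2/2)\,d\tau$ as $2\sum_{k=0}^{n-1}(-1)^k v_{2k+1}(t)/(2k+1)+(-1)^n v_p(t)/p$; combining with the $(-1)^n t^p/p^2$ correction yields the claimed formula.

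\textbf{Second congruence.} I would proceed symmetrically from \eqref{eq15}: the substitution $t=\tau^2/16$ there gives
$$(-1)^n w_n(\tau^2/2-1)=\sum_{k=0}^{n}\binom{2k}{k}\left(\frac{\tau}{4}\right)^{2k}\prod_{j=0}^{k-1}\left(1-\frac{p^2}{(2j+1)^2}\right)$$
(after re-indexing, $\prod_{j=1}^{k}(1-p^2/(2j-1)^2)=\prod_{j=0}^{k-1}(1-p^2/(2j+1)^2)$). Subtracting the $k=0$ term, dividing by $\tau$, and integrating from $0$ to $t$ produces $\tfrac12\sum_{k=1}^n\binom{2k}{k}(t/4)^{2k}/k$ times the same product, which is $\equiv 1\pmod{p^2}$ for every $k\le n$. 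By Proposition~\ref{p1}, equation~\eqref{intu1}, the left side equals $\sum_{k=1}^{n}(-1)^k v_{2k}(t)/(2k)-H_n(1)$. Doubling and rearranging turns this into
$$\sum_{k=1}^{n}\frac{\binom{2k}{k}}{k}\left(\frac{t}{4}\right)^{2k}\equiv \sum_{k=1}^{n}\frac{(-1)^k v_{2k}(t)}{k}-2H_{(p-1)/2}(1)\pmod{p^2},$$
and the congruence $H_{(p-1)/2}(1)\equiv -2q_p(2)+pq_p^2(2)\pmod{p^2}$ from~(vi) finishes the proof.

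The main technical obstacle is the boundary term in the first congruence: the natural summation range from \eqref{eq10} runs up to $k=n$, while the target sum stops at $k=(p-3)/2=n-1$, and the missing $k=n$ contribution carries both a $(2n+1)^2=p^2$ in the denominator and a binomial coefficient that must be simplified via \eqref{CCC}. Once that singular term is identified as $(-1)^{(p-1)/2}t^p/p^2$ and subtracted, everything else is a direct application of Proposition~\ref{p1} together with the observation that the finite product $\prod(1-p^2/(2j+1)^2)$ is harmless modulo $p^2$.
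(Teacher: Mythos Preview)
Your proposal is correct and follows essentially the same route as the paper: substitute $\tau^2/16$ into \eqref{eq10} and \eqref{eq15}, integrate in $\tau$, invoke the two formulas of Proposition~\ref{p1}, isolate the $k=n$ boundary term in the first congruence via \eqref{CCC}, and finish the second with (vi). The paper carries the extra factor of $t$ as $t^{2k+1}$ rather than dividing by $pt$ up front, but the argument is identical.
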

\begin{proof}
Let $n=(p-1)/2$. From
(\ref{eq10}) it follows that
$$
w_{n}(1-\tau^2/2)=p\sum_{k=0}^{n}\frac{\binom{2k}{k}}{2k+1}\left(\frac{\tau}{4}
\right)^{2k}
\prod_{j=0}^{k-1}\left(1-\frac{p^2}{(2j+1)^2}\right).
$$
By integrating with respect to $\tau$, and  using \eqref{CCC} for $k=n$ we obtain
\begin{align*}
\int_0^tw_{n}(1-\tau^2/2)\,d\tau&=
p\sum_{k=0}^{n}\frac{\binom{2k}{k}}{(2k+1)^2}\cdot
\frac{t^{2k+1}}{16^k}\prod_{j=0}^{k-1}\left(1-\frac{p^2}{(2j+1)^2}\right)\\
&=p\sum_{k=0}^{n-1}\frac{\binom{2k}{k}}{(2k+1)^2}\cdot
\frac{t^{2k+1}}{16^k}\sum_{j=0}^{k} (-1)^jp^{2j}\,\HH_k(\{2\}^j)+\frac{t^{p}(-1)^n}{p}.
\end{align*}
On the other hand, by (\ref{intu}), we get
$$\sum_{k=0}^{n-1}\frac{\binom{2k}{k}}{(2k+1)^2}\cdot
\left(\frac{t}{4}\right)^{2k}\sum_{j=0}^{k} (-1)^jp^{2j}\,\HH_k(\{2\}^j)=
\frac{(-1)^{n} (v_{p}(t)-t^p)}{tp^2}
+{2\over tp}\sum_{k=0}^{n-1}
\frac{(-1)^k v_{2k+1}(t)}{2k+1}.$$
and the first congruence  easily follows.
To prove the second one, we note that (\ref{eq15}) yields
$$
\frac{(-1)^{\frac{p-1}{2}} w_{\frac{p-1}{2}}(\tau^2/2-1)-1}{\tau}
=\sum_{k=1}^{(p-1)/2}\binom{2k}{k}\prod_{j=1}^k\left(1-\frac{p^2}{(2j-1)^2}\right)
\frac{\tau^{2k-1}}{16^k}.
$$
Integrating the above equality with respect to $\tau$ and applying (\ref{intu1}) we get
$$
\sum_{k=1}^{(p-1)/2}\frac{\binom{2k}{k}}{k}\left(\frac{t}{4}\right)^{2k}\prod_{j=1}^k
\left(1-\frac{p^2}{(2j-1)^2}\right)=\sum_{k=1}^{(p-1)/2}\frac{(-1)^k v_{2k}(t)}{k}
-2H_{\frac{p-1}{2}}(1),
$$
which by (vi), implies the required congruence.
\end{proof}

\section{Some numerical congruences} \label{Section4}

The special values of the finite polylogarithms $\mathcal{L}_d(x)=\sum_{k=1}^{p-1}\frac{x^k}{k^d}$
investigated in \cite[Section~4]{MT:11}) allow us to give some evaluations of the polynomial congruences established in the previous section.
As an example of what this means, in the next corollary we consider the first congruence of Theorem~\ref{t1} when $16t\in\{1,-1,2,3,1/2,4\}$. Note that the result for $t=1/16$ will be refined in the next section.

\begin{corollary} \label{c1}
Let $p>3$ be a prime. Then we have
\begin{align*}
\sum_{k=0}^{(p-3)/2}\frac{\binom{2k}{k}}{(2k+1) 4^k}&\equiv (-1)^{\frac{p+1}{2}}
\Bigl(q_p(2)-\frac{p^2}{16}B_{p-3}\Bigr) \pmod{p^3}, \\
\sum_{k=0}^{(p-3)/2}\frac{\binom{2k}{k}}{(2k+1)16^k}&\equiv
\frac{(-1)^{\frac{p+1}{2}}}{36}p^2\,B_{p-3} \pmod{p^3}, \\
\sum_{k=0}^{(p-3)/2}\frac{\binom{2k}{k}}{(2k+1)8^k}&\equiv
(-1)^{\frac{p+1}{2}}\left(\frac{2}{p}\right)
\Bigl(\frac{1}{2}\,q_p(2)-\frac{p}{8}q_p^2(2)+\frac{p^2}{16}\Bigl(q_p^3(2)-
\frac{B_{p-3}}{8}\Bigr)\Bigr) \pmod{p^3}, \\
\sum_{k=0}^{(p-3)/2}\frac{\binom{2k}{k}}{(2k+1)}\left(\frac{3}{16}\right)^k
&\equiv (-1)^{\frac{p+1}{2}}\left(\frac{3}{p}\right)
\Bigl(\frac{1}{2}q_p(3)-\frac{p}{8}q_p^2(3)+p^2\Bigl(\frac{q_p^3(3)}{16}-
\frac{B_{p-3}}{27}\Bigr)\Bigr) \pmod{p^3}, \\
\sum_{k=0}^{(p-3)/2}\frac{\binom{2k}{k}}{(2k+1)(-32)^k}&\equiv
\left(\frac{2}{p}\right)\left(2q_p(2)-p\,q_p^2(2)+\frac{p^2}{3}\left(2q_p^3(2)-
\frac{7}{32}B_{p-3}\right)\right) \pmod{p^3}.
\end{align*}
Moreover, if $p>5,$ then
$$
\sum_{k=0}^{(p-3)/2}\frac{\binom{2k}{k}}{(2k+1)(-16)^k}\equiv
q_L-\frac{p^2}{15}\left(\frac{1}{2}\,q_L^3+B_{p-3}\right) \pmod{p^3}.
$$
where $q_L=(L_p-1)/p$ is the {\sl Lucas quotient} and $L_p=v_p(1,-1)$ is the $p$th {\sl Lucas number}.
\end{corollary}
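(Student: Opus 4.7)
The plan is to apply the first congruence of Theorem~\ref{t1} separately to each of the six rational values $t\in\{1/4,1/16,1/8,3/16,-1/32,-1/16\}$. For each such $t$ the theorem reduces the target sum to three pieces that must be computed: (a) the closed form of $w_{(p-1)/2}(1-8t)$ modulo $p^3$ via~\eqref{eq08} with $\alpha=(1-8t)+\sqrt{(1-8t)^2-1}$; (b) the Fermat power $(-16t)^{(p-1)/2}$ modulo $p^3$; and (c) the finite polylogarithm $T(t):=\sum_{k=1}^{p-1}v_k(2-16t)/k^3$ modulo $p$, since it appears with a factor $p^2$.

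The six values are arranged so that $\alpha$ is tractable: as $t$ runs through the list, $x=1-8t$ takes the values $-1,\tfrac12,0,-\tfrac12,\tfrac54,\tfrac32$, so $\alpha\in\{-1,\,e^{i\pi/3},\,i,\,e^{2i\pi/3},\,2,\,\phi^2\}$, where $\phi=(1+\sqrt{5})/2$. Formula~\eqref{eq08} immediately gives $w_n(-1)=(-1)^n$ and $w_n(\tfrac54)=2^{n+1}-2^{-n}$; the Fibonacci--Lucas identity $L_{2n+1}=F_{2n+2}+F_{2n}$ further yields $w_n(\tfrac32)=L_{2n+1}$, whence $w_{(p-1)/2}(\tfrac32)=L_p$. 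When $\alpha$ is a primitive third, fourth, or sixth root of unity, one pairs $\alpha^{(p-1)/2}$ with $\alpha^{-(p-1)/2}$ inside \eqref{eq08} \emph{before} dividing by $p$; this cancels the apparent pole and leaves expressions in the Legendre symbols $(-1/p),(2/p),(3/p),(-3/p)$ multiplied by an expansion in the Fermat quotients $q_p(2),\,q_p(3)$ obtained from
$$a^{(p-1)/2}\equiv \left(\frac{a}{p}\right)\left(1+\tfrac{p q_p(a)}{2}-\tfrac{p^{2}q_p(a)^{2}}{8}+\tfrac{p^{3}q_p(a)^{3}}{16}\right)\pmod{p^{4}},$$
which is also exactly what (b) requires.

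For piece (c), writing $v_k(y)=\beta^{k}+\beta^{-k}$ with $\beta,\beta^{-1}$ the roots of $z^{2}-yz+1$, the sum $T(t)$ modulo $p$ becomes a short linear combination of the finite polylogarithms $\mathcal{L}_3(a)=\sum_{k=1}^{p-1}a^{k}/k^{3}\pmod p$ for $a\in\{-1,\,e^{\pm i\pi/3},\,\pm i,\,e^{\pm 2i\pi/3},\,2,\,1/2\}$; in the Lucas case it specializes to $\sum_{k=1}^{p-1}L_{2k}/k^{3}\pmod p$. All these evaluations are carried out in \cite[Section~4]{MT:11} and produce a multiple of $B_{p-3}$ together with a cubic Fermat (or Lucas) quotient. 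Substituting (a), (b), and (c) into Theorem~\ref{t1} and simplifying yields the six congruences; the Lucas one is especially clean, because $(-16t)^{(p-1)/2}=1$ and $(L_p-1)/p=q_L$ already deliver the leading term $q_L$ with no further manipulation.

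The main obstacle is the $p^2$-order bookkeeping in (a): each $\alpha^{\pm(p-1)/2}$ must be expanded to order $p^{3}$ in the Fermat quotients, and after division by $p$ the result has to merge with the $p^{2}T(t)/64$ correction from (c) so as to reproduce the precise rational coefficients of $q_p^{3}$ and $B_{p-3}$ listed in the corollary. All cancellations of the apparent $1/p$ singularity go through automatically from $\alpha^{p-1}=1+p q_p(\alpha^{2})$, but verifying each of the six final linear combinations in $q_p(2)$, $q_p(3)$, $q_L$ and $B_{p-3}$ requires the case-by-case arithmetic just outlined.
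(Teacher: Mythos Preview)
Your proposal is correct and follows essentially the same route as the paper's own proof: apply the first congruence of Theorem~\ref{t1} at each of the six values of $t$, compute $w_{(p-1)/2}(1-8t)$ in closed form, expand $(-16t)^{(p-1)/2}$ via the Fermat-quotient formula~\eqref{eq11}, and evaluate the residual $\sum_{k=1}^{p-1}v_k(2-16t)/k^3\pmod p$ using the finite-polylogarithm results of \cite[Section~4]{MT:11}. The only cosmetic difference is that the paper handles the root-of-unity cases $x=1/2,\,0,\,-1/2$ through the trigonometric representation~\eqref{eq23.55} (obtaining, e.g., $w_{(p-1)/2}(1/2)=2\sin(\pi p/6)$) rather than by manipulating $\alpha^{\pm(p-1)/2}$ in~\eqref{eq08}, but this is the same computation in a different guise.
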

\begin{proof} Let $n=(p-1)/2$.
Setting $t=1/4$ in the first formula of  Theorem~\ref{t1} and taking into account that by (\ref{eq08}),
$w_{n}(-1)=(-1)^{n}$ we get
$$
\sum_{k=0}^{n-1}\frac{\binom{2k}{k}}{(2k+1)4^k}\equiv (-1)^{n+1}
\Bigl(q_p(2)+\frac{p^2}{16}\sum_{k=1}^{p-1}\frac{v_k(-2)}{k^3}\Bigr) \pmod{p^3}.
$$
Since $v_k(-2)=2(-1)^k,$ by (i) and (vi), we have
$$
\sum_{k=1}^{p-1}\frac{v_k(-2)}{k^3}=2\sum_{k=1}^{p-1}\frac{(-1)^k}{k^3}=
\frac{H_n(3)}{2}-2H_{p-1}(3)\equiv -B_{p-3} \pmod{p},
$$
which implies the first congruence of the corollary.

Similarly, setting $t=1/16$ and using the fact that by (\ref{eq23.55}), for prime $p>3,$
\begin{equation}
w_{\frac{p-1}{2}}(1/2)=2\sin(\pi p/6)=(-1)^{\frac{p-1}{2}}
\label{eq10.5}
\end{equation}
we get by \cite[Section~4]{MT:11},
\begin{equation*}
\sum_{k=0}^{(p-3)/2}\frac{\binom{2k}{k}}{(2k+1)16^k}
\equiv \frac{(-1)^{\frac{p+1}{2}}}{4} p^2 (\mathcal{L}_3(e^{i\pi/3})+\mathcal{L}_3(e^{-i\pi/3}))
\equiv \frac{(-1)^{\frac{p+1}{2}}}{36} p^2 B_{p-3} \pmod{p^3}.
\end{equation*}
Similarly, setting $t=-1/16$ and observing that $w_{\frac{p-1}{2}}(3/2)=L_p$
by \cite[Section~4]{MT:11}, we readily get the last congruence of the corollary.
To prove the other three congruences, we note (see \cite[Lemma~4.13]{MT:11})
that for an integer $a$ coprime to $p$,
\begin{equation}
a^{\frac{p-1}{2}}\equiv \left(\frac{a}{p}\right)\Bigl(1+\frac{1}{2}p\,q_p(a)-
\frac{1}{8}p^2q_p^2(a)+\frac{1}{16}p^3q_p^3(a)\Bigr) \pmod{p^4}.
\label{eq11}
\end{equation}
From (\ref{eq08}) and (\ref{eq23.55}) we easily find that
\begin{equation}
w_{\frac{p-1}{2}}(0)=\cos(\pi(p-1)/4)+\sin(\pi(p-1)/4)=\sqrt{2}\sin(\pi p/4)
=(-1)^{\frac{p-1}{2}}\left(\frac{2}{p}\right),
\label{eq12}
\end{equation}
\begin{equation}
w_{\frac{p-1}{2}}(-1/2)=\cos(\pi(p-1)/3)+\frac{1}{\sqrt{3}}\sin(\pi(p-1)/3)
=\frac{2}{\sqrt{3}}\sin(\pi p/3)=(-1)^{\frac{p-1}{2}}\left(\frac{3}{p}\right),
\label{eq13}
\end{equation}
and
\begin{equation}
w_{\frac{p-1}{2}}(5/4)=2^{\frac{p+1}{2}}-2^{\frac{1-p}{2}}.
\label{eq13.56}
\end{equation}
Now setting consequently $t=1/8,$  $t=3/16,$ and $t=-1/32$ in Theorem~\ref{t1} by (\ref{eq11})--(\ref{eq13.56}),
\cite[Section~4]{MT:11} and the formula
$$
2^{\frac{1-p}{2}}\equiv \left(\frac{2}{p}\right)\Bigl(1-\frac{1}{2}\,p\,q_p(2)+\frac{3}{8}\,p^2q_p^2(2)
-\frac{5}{16}\,p^3q_p^3(2)\Bigr) \pmod{p^4},
$$
we get the desired congruences.
\end{proof}
By the same way, from Theorem~\ref{t1} and \cite[Section~4]{MT:11} we get the following corollary.
\begin{corollary}\label{corfour}
Let $p>3$ be a prime. Then
\begin{align*}
\sum_{k=0}^{(p-1)/2}\frac{\binom{2k}{k}}{16^k} & \equiv
\left(\frac{3}{p}\right)+\left(\frac{1}{p}\right) \frac{p^2}{24} B_{p-2}\Bigl(\frac{1}{3}\Bigr)
\pmod{p^3},  \\
\sum_{k=0}^{(p-1)/2}\binom{2k}{k}\left(\frac{3}{16}\right)^k & \equiv
1+\left(\frac{-3}{p}\right) \frac{p^2}{12} B_{p-2}\Bigl(\frac{1}{3}\Bigr)
\pmod{p^3}.
\end{align*}
\end{corollary}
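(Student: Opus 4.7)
My plan is to apply the second congruence of Theorem~\ref{t1} with $t=1/16$ for the first sum and $t=3/16$ for the second. In both cases the arguments $8t-1 = \mp 1/2$ and $2-16t = \pm 1$ are special values for which $w_{(p-1)/2}$ and $u_k$ admit closed-form evaluations. By formulas (\ref{eq10.5}) and (\ref{eq13}), $w_{(p-1)/2}(1/2) = (-1)^{(p-1)/2}$ and $w_{(p-1)/2}(-1/2) = (-1)^{(p-1)/2}\left(\frac{3}{p}\right)$, yielding the leading terms $\left(\frac{3}{p}\right)$ and $1$, respectively, after multiplying Theorem~\ref{t1} by $(-1)^{(p-1)/2}$. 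Since the $p^2$-correction involves $1/t^{(p-1)/2}$ only modulo $p$, Fermat's little theorem reduces this prefactor to $1$ in the first case and to $\left(\frac{3}{p}\right)$ in the second.

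Consequently, the two target congruences are equivalent to the mod $p$ identities
$$\sum_{k=1}^{p-1}\frac{u_k(1)}{k^2} \equiv \frac{(-1)^{(p-1)/2}}{12}\,B_{p-2}\!\left(\tfrac{1}{3}\right) \quad\text{and}\quad \sum_{k=1}^{p-1}\frac{u_k(-1)}{k^2} \equiv \frac{1}{6}\,B_{p-2}\!\left(\tfrac{1}{3}\right) \pmod{p}.$$
To verify these I would diagonalize the Lucas sequences using the roots of $z^2\mp z+1$: for $u_k(1)$ the roots are $\zeta_6 = e^{i\pi/3}$ and $\bar\zeta_6$, while for $u_k(-1)$ they are the primitive cube roots $\zeta_3, \bar\zeta_3$. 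This gives
$$\sum_{k=1}^{p-1}\frac{u_k(\pm 1)}{k^2} = \frac{\mathcal{L}_2(\zeta) - \mathcal{L}_2(\bar\zeta)}{\zeta - \bar\zeta},$$
with $\zeta = \zeta_6$ or $\zeta_3$ accordingly. The required mod $p$ evaluations of $\mathcal{L}_2$ at third and sixth roots of unity are exactly those established in \cite[Section~4]{MT:11} and already invoked in the proof of Corollary~\ref{c1}; they express these differences as explicit rational multiples of $(\zeta-\bar\zeta) B_{p-2}(1/3)$, producing the coefficients $1/12$ and $1/6$.

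The main obstacle is the bookkeeping of signs and Legendre symbols introduced by the $t^{(p-1)/2}$ prefactor of Theorem~\ref{t1} and by the closed forms for $w_{(p-1)/2}(\pm 1/2)$: matching the target right-hand sides requires the identifications $(-1)^{(p-1)/2} = \left(\frac{-1}{p}\right)$ and $\left(\frac{-3}{p}\right) = \left(\frac{-1}{p}\right)\left(\frac{3}{p}\right)$, which explains why the first congruence exhibits the (redundant) factor $\left(\frac{1}{p}\right)$ and the second carries $\left(\frac{-3}{p}\right)$. Once the finite polylogarithm formulas from \cite{MT:11} are plugged in, the remaining verification is purely a matter of algebra.
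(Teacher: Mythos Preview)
Your proposal is correct and follows exactly the route the paper indicates: apply the second congruence of Theorem~\ref{t1} at $t=1/16$ and $t=3/16$, evaluate $w_{(p-1)/2}(\mp 1/2)$ via (\ref{eq10.5}) and (\ref{eq13}), and reduce the $p^2$-term to the finite polylogarithm values $\mathcal{L}_2$ at sixth/third roots of unity supplied by \cite[Section~4]{MT:11}. The paper's proof is literally the one-line remark ``By the same way, from Theorem~\ref{t1} and \cite[Section~4]{MT:11}'', and your write-up simply makes this explicit, including the sign/Legendre-symbol bookkeeping.
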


Note that the first congruence modulo $p^2$ in Corollary~\ref{corfour} appeared in \cite[Corollary~1.1]{Sunzwfib}.

Recall that the Fibonacci numbers $\{F_n\}_{n\ge 0}$ and the Lucas numbers
$\{L_n\}_{n\ge 0}$
are defined by $F_n=u_n(1,-1)$ and  $L_n=v_n(1,-1)$ for all non-negative integers $n$.

In the next theorem we confirm two conjectures raised in \cite[A90]{Sunzwopen}.
\begin{theorem} \label{t2}
Let $p\ne 2, 5$ be a prime. Then we have
\begin{equation*}
\begin{split}
\sum_{k=0}^{(p-3)/2}\frac{\binom{2k}{k}F_{2k+1}}{(2k+1)16^k}&\equiv
(-1)^{\frac{p+1}{2}}\;\frac{F_p-\left(\frac{p}{5}\right)}{p}
\pmod{p^2}, \\
\sum_{k=0}^{(p-3)/2}\frac{\binom{2k}{k}L_{2k+1}}{(2k+1)16^k}&\equiv
(-1)^{\frac{p+1}{2}}\;\frac{L_p-1}{p}
\pmod{p^2}.
\end{split}
\end{equation*}
\end{theorem}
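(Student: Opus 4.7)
The plan is to apply Theorem~\ref{t1} (modulo $p^2$) with the two values $t = \phi^2/16$ and $t = \psi^2/16$, where $\phi = (1+\sqrt{5})/2$ and $\psi = (1-\sqrt{5})/2$ satisfy $\phi+\psi=1$, $\phi\psi=-1$, and $\phi^2=\phi+1$. The proof of Theorem~\ref{t1} carries over verbatim to $t$ in the localization $\mathbb{Z}_{(p)}[\phi]$, and since $p\neq 5$ is unramified in $\mathbb{Z}[\phi]$, any resulting congruence modulo $p^3$ with both sides in $\mathbb{Z}$ descends to $\mathbb{Z}/p^3$.

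Using Binet's formulas $F_{2k+1} = (\phi^{2k+1}-\psi^{2k+1})/\sqrt{5}$ and $L_{2k+1}=\phi^{2k+1}+\psi^{2k+1}$, I would split each target sum into a $\phi$-part and a $\psi$-part. Direct calculation gives $1-8(\phi^2/16) = (1-\phi)/2 = \psi/2$ and $-16\cdot\phi^2/16 = -\phi^2$, and symmetrically for $\psi^2/16$. Setting $n=(p-1)/2$ and applying Theorem~\ref{t1} to each piece, together with $(-\phi^2)^n\phi = (-1)^n\phi^p$ and the identities $\phi^p-\psi^p=\sqrt{5}F_p$, $\phi^p+\psi^p=L_p$, this leads to
\begin{align*}
p\sqrt{5}\sum_{k=0}^{(p-3)/2}\frac{\binom{2k}{k}F_{2k+1}}{(2k+1)16^k}
&\equiv \phi\, w_n(\psi/2) - \psi\, w_n(\phi/2) - (-1)^n\sqrt{5}F_p \pmod{p^3},\\
p\sum_{k=0}^{(p-3)/2}\frac{\binom{2k}{k}L_{2k+1}}{(2k+1)16^k}
&\equiv \phi\, w_n(\psi/2) + \psi\, w_n(\phi/2) - (-1)^n L_p \pmod{p^3}.
\end{align*}

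The core of the argument is then to establish the two exact equalities
$$\phi\, w_n(\psi/2) - \psi\, w_n(\phi/2) = (-1)^n\leg{p}{5}\sqrt{5}, \qquad \phi\, w_n(\psi/2) + \psi\, w_n(\phi/2) = (-1)^n.$$
Since $\cos(3\pi/5) = \psi/2$ and $\cos(\pi/5) = \phi/2$, formula~\eqref{eq23.55} yields the closed forms $w_n(\psi/2) = \sin(3p\pi/10)/\sin(3\pi/10)$ and $w_n(\phi/2) = \sin(p\pi/10)/\sin(\pi/10)$; using $\sin(3\pi/10) = \phi/2$ and $\sin(\pi/10) = -\psi/2$, these simplify to $\phi\,w_n(\psi/2) = 2\sin(3p\pi/10)$ and $\psi\,w_n(\phi/2) = -2\sin(p\pi/10)$. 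The two identities thus reduce to the evaluation of $2\sin(3p\pi/10)\pm 2\sin(p\pi/10)$, and since both values are periodic of period $20$ in $p$, this is established by checking the eight residue classes of odd primes $p>5$ modulo $20$ and matching the signs against $(-1)^n$ and $\leg{p}{5}$.

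Substituting the identities back and dividing by $p$ (and by $\sqrt{5}$ in the Fibonacci case) delivers both congruences with prefactor $(-1)^{n+1} = (-1)^{(p+1)/2}$, as required. The main obstacle is the trigonometric case analysis, which, while elementary, requires careful bookkeeping across eight residues; the only other subtlety is justifying the use of Theorem~\ref{t1} at the irrational values $t\in\mathbb{Z}[\phi]$, which follows since the proof of Theorem~\ref{t1} only uses polynomial identities valid over any extension in which $p$ is not a zero divisor.
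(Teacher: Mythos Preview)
Your proposal is correct and follows essentially the same route as the paper's proof: split via Binet's formulas, apply Theorem~\ref{t1} at $t=\phi_\pm^2/16$, and then establish the exact evaluations of $\phi_+\, w_n(\phi_-/2)\mp\phi_-\, w_n(\phi_+/2)$ via the trigonometric representation~\eqref{eq23.55} together with a case analysis on the residue of $p$. Your reduction to $w_{(p-1)/2}(\cos\theta)=\sin(p\theta/2)/\sin(\theta/2)$ and your explicit remark on why Theorem~\ref{t1} extends to $t\in\mathbb{Z}_{(p)}[\phi]$ are minor refinements, but the argument is otherwise the same.
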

\begin{proof} We first show that for an odd prime $p\ne 5$  and $\phi_{\pm}=(1\pm\sqrt{5})/2,$ we have
\begin{align}\label{w1}
\phi_{+}\cdot w_{\frac{p-1}{2}}(\phi_{-}/2)-
\phi_{-}\cdot w_{\frac{p-1}{2}}(\phi_{+}/2)&=(-1)^{\frac{p-1}{2}}
\left(\frac{p}{5}\right)\sqrt{5}, \\\label{w2}
\phi_{+}\cdot w_{\frac{p-1}{2}}(\phi_{-}/2)+
\phi_{-}\cdot w_{\frac{p-1}{2}}(\phi_{+}/2)&=(-1)^{\frac{p-1}{2}}.
\end{align}
Since $\arccos(\phi_{+}/2)=\pi/5$ and $\arccos(\phi_{-}/2)=3\pi/5,$ by (\ref{eq23.55}), we get
\begin{align*}
\phi_{+}\cdot w_{\frac{p-1}{2}}(\phi_{-}/2)\mp\phi_{-}\cdot
w_{\frac{p-1}{2}}(\phi_{+}/2) \\
=2\Bigl(\cos(\pi/5)\cos(3\pi(p-1)/10)&\mp\cos(3\pi/5)\cos(\pi(p-1)/10)\Bigr) \\
+\frac{\sqrt{5}}{2}\left(\frac{\sin(3\pi(p-1)/10)}{\sin(3\pi/5)}\right.
&\pm\left.\frac{\sin(\pi(p-1)/10)}{\sin(\pi/5)}\right).
\end{align*}
Considering primes $p$ modulo $10$ and using the well-known fact that
\begin{equation*}
\left(\frac{p}{5}\right)=\begin{cases}
\,\,\, \,1 & \quad\text{if}\quad p\equiv\pm 1 \pmod{5} \\
-1 & \quad\text{if}\quad p\equiv\pm 2 \pmod{5}
\end{cases}
\end{equation*}
we easily derive the result. Since
$$
F_{2k+1}=\frac{1}{\sqrt{5}}\left(\phi_{+}^{2k+1}-
\phi_{-}^{2k+1}\right),
$$
from Theorem~\ref{t1} we readily find that
\begin{equation*}
\begin{split}
\sum_{k=0}^{(p-3)/2}\frac{\binom{2k}{k} F_{2k+1}}{(2k+1)16^k}&\equiv
\frac{1}{p\sqrt{5}}\left(\phi_{+}\cdot w_{\frac{p-1}{2}}(
\phi_{-}/2)+(-1)^{\frac{p+1}{2}}\phi_{+}^p \right.\\[5pt]
&\left.-\phi_{-}\cdot w_{\frac{p-1}{2}}(\phi_{+}/2)
-(-1)^{\frac{p+1}{2}}\phi_{-}^p
\right) \pmod{p^2}.
\end{split}
\end{equation*}
Now by \eqref{w1}, we get the first congruence of Theorem~\ref{t2}. Analogously,
for the Lucas numbers we have
$$
L_{2k+1}=\phi_{+}^{2k+1}+\phi_{-}^{2k+1}
$$
and therefore, by Theorem~\ref{t1} and \eqref{w2}, we obtain
\begin{equation*}
\begin{split}
\sum_{k=0}^{(p-3)/2}\frac{\binom{2k}{k}L_{2k+1}}{(2k+1)16^k}&\equiv
\frac{1}{p}\left(\phi_{+}\cdot w_{\frac{p-1}{2}}(\phi_{-}/2)
+(-1)^{\frac{p+1}{2}}\phi_{+}^p\right. \\
&\left.+
\phi_{-}\cdot w_{\frac{p-1}{2}}(\phi_{+}/2)
+(-1)^{\frac{p+1}{2}}\phi_{-}^p\right)
=\frac{(-1)^{\frac{p+1}{2}}}{p}(L_p-1) \pmod{p^2},
\end{split}
\end{equation*}
which completes the proof.
\end{proof}
\begin{corollary}
Let $p$ be an odd prime. Then we have
\begin{align*}
\sum_{k=0}^{(p-3)/2}\frac{\binom{2k}{k}}{(2k+1)^2 4^k}& \equiv (-1)^{\frac{p+1}{2}}
\Bigl(\frac{1}{2}q_p^2(2)-\frac{1}{3}p\,q_p^3(2)-\frac{1}{16}pB_{p-3}\Bigr) \pmod{p^2}, \\
\sum_{k=1}^{(p-1)/2}\frac{\binom{2k}{k}}{k 4^k}& \equiv 2q_p(2)-p\,q_p^2(2)+(-1)^{\frac{p+1}{2}}2pE_{p-3} \pmod{p^2}.
\end{align*}
\end{corollary}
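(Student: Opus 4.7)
The plan is to specialize Theorem~\ref{t5} to $t=2$, for which $(t/4)^{2k}=1/4^k$ makes the two left-hand sides of Theorem~\ref{t5} become exactly the two sums in the corollary. The crucial simplification is that the Lucas sequence $v_n(2,1)$ is identically equal to $2$: the characteristic polynomial $z^2-2z+1=(z-1)^2$ has the double root $1$, so $v_n(2)=2$ for every $n\ge 0$. In particular $v_p(2)-2^p=2-2^p=-2p\,q_p(2)$, and every $v_{2k+1}(2)$ and $v_{2k}(2)$ on the right-hand sides of Theorem~\ref{t5} equals $2$.

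For the first congruence, Theorem~\ref{t5} with $t=2$ reduces to
\[
\sum_{k=0}^{(p-3)/2}\frac{\binom{2k}{k}}{(2k+1)^2\,4^k}\equiv -\frac{(-1)^{(p-1)/2}q_p(2)}{p}+\frac{2}{p}\sum_{k=0}^{(p-3)/2}\frac{(-1)^k}{2k+1}\pmod{p^2}.
\]
I would then evaluate the alternating sum modulo $p^3$ by Lemma~\ref{l3}, obtaining $2(-1)^{(p-1)/2}\sum(-1)^k/(2k+1)\equiv \HH_{(p-1)/2}(1)-p\HH_{(p-1)/2}(2)-p^2\HH_{(p-1)/2}(2,1)\pmod{p^3}$, and convert the $\HH$-sums into ordinary multiple harmonic sums via Lemma~\ref{lemmaHH}. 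The required inputs are (vi) for $H_{(p-1)/2}(1)$ and $H_{(p-1)/2}(2)$, (i) for $H_{p-1}(1)$ and $H_{p-1}(2)$, and \eqref{unoduepiuunotre} for $H_{(p-1)/2}(1,2)$. The Fermat-quotient pieces from $\HH_{(p-1)/2}(1)/p$ combine with the leading $-q_p(2)/p$ to give $-q_p^2(2)/2+pq_p^3(2)/3$, while three $B_{p-3}$ contributions (weights $-1/24$ from $\HH(1)/p$, $-1/12$ from $-\HH(2)$, and $3/16$ from $-p\HH(2,1)$) sum to exactly $1/16$; writing $(-1)^{(p-1)/2}=-(-1)^{(p+1)/2}$ yields the claimed form.

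For the second congruence, the substitution gives
\[
\sum_{k=1}^{(p-1)/2}\frac{\binom{2k}{k}}{k\,4^k}\equiv 4q_p(2)-2p\,q_p^2(2)+2\sum_{k=1}^{(p-1)/2}\frac{(-1)^k}{k}\pmod{p^2}.
\]
To evaluate the remaining alternating sum I would start from $\sum_{k=1}^{p-1}(-1)^k/k=H_{(p-1)/2}(1)-H_{p-1}(1)$, split the range at $(p-1)/2$, and apply $k\mapsto p-k$ on the upper half with $1/(p-k)\equiv -1/k-p/k^2\pmod{p^2}$ to produce
\[
H_{(p-1)/2}(1)-H_{p-1}(1)\equiv 2\sum_{k=1}^{(p-1)/2}\frac{(-1)^k}{k}+p\sum_{k=1}^{(p-1)/2}\frac{(-1)^k}{k^2}\pmod{p^2}.
\]
Inserting $H_{(p-1)/2}(1)\equiv -2q_p(2)+pq_p^2(2)\pmod{p^2}$ from (vi) and $H_{p-1}(1)\equiv 0\pmod{p^2}$ from (i), together with the classical evaluation $\sum_{k=1}^{(p-1)/2}(-1)^k/k^2\equiv 2(-1)^{(p-1)/2}E_{p-3}\pmod{p}$ (a finite polylogarithm at $x=-1$ as in \cite[Section~4]{MT:11}), solves for $\sum(-1)^k/k$ and gives the desired expression.

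The main obstacle I anticipate is the delicate bookkeeping in the first congruence: three distinct $B_{p-3}$ contributions, coming from $H_{(p-1)/2}(1)$, $H_{(p-1)/2}(2)$, and $H_{(p-1)/2}(1,2)$, must be tracked with the correct signs so that they conspire into the single coefficient $1/16$. A secondary point is isolating or re-deriving the Euler-number congruence for $\sum(-1)^k/k^2$ that supplies the $E_{p-3}$ term in the second congruence.
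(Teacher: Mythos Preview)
Your proposal is correct and, for the first congruence, coincides with the paper's proof: specialize Theorem~\ref{t5} at $t=2$, use $v_n(2)=2$, invoke Lemma~\ref{l3} modulo $p^3$, and convert via Lemma~\ref{lemmaHH}. The only cosmetic difference is that the paper cites \eqref{C2} to get $H_{(p-1)/2}(1,2)\equiv\frac{3}{2}B_{p-3}\pmod p$, whereas you cite \eqref{unoduepiuunotre}; modulo $p$ these give the same value, so your bookkeeping of the three $B_{p-3}$ weights $-\tfrac{1}{24}-\tfrac{1}{12}+\tfrac{3}{16}=\tfrac{1}{16}$ goes through exactly as you describe.

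For the second congruence the routes diverge. The paper uses the elementary identity $\sum_{k=1}^{(p-1)/2}(-1)^k/k=H_{\lfloor p/4\rfloor}(1)-H_{(p-1)/2}(1)$ and then appeals to \cite[Corollary~3.3]{Sunzh:08} for $H_{\lfloor p/4\rfloor}(1)\pmod{p^2}$, which supplies the Euler-number term directly. Your approach instead passes through the full sum $\sum_{k=1}^{p-1}(-1)^k/k=H_{(p-1)/2}(1)-H_{p-1}(1)$, applies the reflection $k\mapsto p-k$ to isolate $\sum_{k=1}^{(p-1)/2}(-1)^k/k$, and then needs the congruence $\sum_{k=1}^{(p-1)/2}(-1)^k/k^2\equiv 2(-1)^{(p-1)/2}E_{p-3}\pmod p$. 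Both arguments rely on an external Euler-number input of the same depth; the paper's is slightly more direct (one citation, no reflection step), while yours has the virtue of staying entirely within the alternating-sum framework and avoiding the quarter-integer harmonic number $H_{\lfloor p/4\rfloor}$.
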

\begin{proof}
Setting $t=2$ in Theorem~\ref{t5} and noting that $v_n(2)=2$ for all $n\in {\mathbb N},$ we get
\begin{equation*}
\sum_{k=0}^{(p-3)/2}\frac{\binom{2k}{k}}{(2k+1)^2 4^k}\equiv \frac{(-1)^{\frac{p+1}{2}}q_p(2)}{p}
+\frac{2}{p}\sum_{k=0}^{(p-3)/2}\frac{(-1)^k}{2k+1} \pmod{p^2}.
\end{equation*}
Now the required congruence easily follows from Lemmas~\ref{l3} and \ref{lemmaHH}, and formulae (\ref{C2}) and (vi).
Similarly, we have
\begin{equation*}
\sum_{k=1}^{(p-1)/2}\frac{\binom{2k}{k}}{k 4^k}\equiv
4q_p(2)-2p\,q_p^2(2)+2\sum_{k=1}^{(p-1)/2}\frac{(-1)^k}{k} \pmod{p^2}.
\end{equation*}
Observing that by \cite[Corollary~3.3]{Sunzh:08} and (vi),
\begin{equation*}
\sum_{k=1}^{(p-1)/2}\frac{(-1)^k}{k}
=H_{\lfloor p/4\rfloor}-H_{(p-1)/2}\equiv
-q_p(2)+\frac{1}{2}p\,q_p^2(2)+(-1)^{\frac{p+1}{2}}pE_{p-3} \pmod{p^2},
\end{equation*}
we conclude the proof.
\end{proof}

\section{Two remarkable special cases} \label{Section5}
By revisiting the combinatorial proof (due to D. Zagier) presented  in \cite[Section 5]{Le:81}, it is easy to obtain
the finite versions of  identities (\ref{id1}), (\ref{id2}) in the following form: if $r$ is a positive odd integer then
\begin{align}\label{idodd}\nonumber
\sum_{k=0}^{n-1}{{2k\choose k}\over 16^k}&
\left(\sum_{j=0}^{{r-1\over 2}}{(-1)^j\HH_k(\{2\}^j)\over (2k+1)^{r-2j}}
-{(-1)^{{r-1\over 2}}\over 4}\cdot{\HH_k(\{2\}^{{r-1\over 2}})\over (2k+1)}\right)\\
&=\sum_{k=0}^{n-1}{(-1)^k\over (2k+1)^r}+{(-1)^{{r-1\over 2}}\over 4}
\sum_{k=0}^{n-1}{{2k\choose k}\over 16^k{n+k\choose 2k+1}}\cdot{(-1)^{n-k}\HH_k(\{2\}^{{r-1\over 2}})\over (2k+1)},
\end{align}
and if $r$ is a positive even integer then
\begin{align}\label{ideven}\nonumber
\sum_{k=0}^{n-1}{{2k\choose k}\over (-16)^k}&
\left(\sum_{j=0}^{{r\over 2}-1}{(-1)^j\HH_k(\{2\}^j)\over (2k+1)^{r-2j}}
+{(-1)^{{r\over 2}-1}\over 4}\cdot{\HH_k(\{2\}^{{r\over 2}-1})\over (2k+1)^2}\right)\\
&=\sum_{k=0}^{n-1}{1\over (2k+1)^r}+{(-1)^{{r\over 2}-1}\over 4}
\sum_{k=0}^{n-1}{{2k\choose k}\over (-16)^k{n+k\choose 2k+1}}\cdot{\HH_k(\{2\}^{{r\over 2}-1})\over (2k+1)^2}.
\end{align}
\begin{theorem}\label{TM} For any prime $p>5$ we have
\begin{align}\label{mc1}
\sum_{k=0}^{{p-3\over 2}}{{2k \choose k}\over 16^k(2k+1)}&\equiv
(-1)^{{p-1\over 2}}\left({H_{p-1}(1)\over 12}+{3\over 160}\,p^4 B_{p-5}\right)
\pmod{p^5},\\
\label{mc2}
\sum_{k=0}^{{p-3\over 2}}{{2k \choose k}\over (-16)^k(2k+1)^2}&\equiv
{H_{p-1}(1)\over 5p}+{7\over 20}\,p^3 B_{p-5}
\pmod{p^4}.
\end{align}
\end{theorem}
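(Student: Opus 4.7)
The plan is to derive \eqref{mc1} from identity \eqref{idodd} at $r=1$ and \eqref{mc2} from identity \eqref{ideven} at $r=2$, then to reduce the resulting multiple harmonic sums with $n=(p-1)/2$ to $H_{p-1}(1)$ and $B_{p-5}$ via the machinery of Section~\ref{Section2}.

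For \eqref{mc1}, setting $r=1$ in \eqref{idodd} collapses its left-hand side to $\tfrac{3}{4}\sum_{k=0}^{n-1}\binom{2k}{k}/(16^k(2k+1))$, giving
\[
\frac{3}{4}\sum_{k=0}^{n-1}\frac{\binom{2k}{k}}{16^k(2k+1)}
= \sum_{k=0}^{n-1}\frac{(-1)^k}{2k+1}
+\frac{(-1)^n}{4}\sum_{k=0}^{n-1}\frac{(-1)^k\binom{2k}{k}}{16^k\binom{n+k}{2k+1}(2k+1)}.
\]
Lemma~\ref{l3} evaluates the alternating sum modulo $p^5$. For the second (``correction'') sum, the relation $\binom{n+k}{2k+1}=\binom{n+k}{2k}(n-k)/(2k+1)$ together with \eqref{CCC} yields
\[
\frac{(-1)^k\binom{2k}{k}}{16^k\binom{n+k}{2k+1}(2k+1)}
=\frac{-2}{(2k+1)\bigl(1-\tfrac{p}{2k+1}\bigr)\prod_{j=0}^{k-1}\bigl(1-\tfrac{p^2}{(2j+1)^2}\bigr)}.
\]
Expanding the geometric factor and the reciprocal product in powers of $p$, truncating modulo $p^5$ and swapping summations, recasts the correction sum as an explicit polynomial in $p$ of degree $\leq 4$ whose coefficients are combinations of $\HH_n$-sums of weight $\leq 5$.

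After substitution into the identity, a pleasant cancellation takes place: the contributions of $\HH_n(1)$, $\HH_n(2,2)$ and $\HH_n(2,2,1)$ all drop out, and one is left with
\[
\sum_{k=0}^{n-1}\frac{\binom{2k}{k}}{16^k(2k+1)} \equiv -\frac{2(-1)^n}{3}\bigl[2p\HH_n(2)+p^2(2\HH_n(2,1)+\HH_n(3))+p^3\HH_n(4)+p^4(\HH_n(5)+\HH_n(2,3)+\HH_n(4,1))\bigr] \pmod{p^5}.
\]
Each surviving $\HH_n$-sum is then reduced to $H_{p-1}(1)$ and $B_{p-5}$ by trading $\HH_n$ for $H_n$ and $H_{p-1}$ via Lemma~\ref{lemmaHH} and applying (i)--(vi), \eqref{C2}, \eqref{Hp2}, \eqref{H2}, \eqref{H3}, and \eqref{unoduepiuunotre}. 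The auxiliary congruence $H_n(2,2)\equiv -H_n(4)/2\pmod{p^2}$, coming from the shuffle relation $H_n(2)^2=2H_n(2,2)+H_n(4)$ together with $H_n(2)^2\equiv 0\pmod{p^2}$ (a consequence of (i) and \eqref{H2}), is also needed in evaluating $\HH_n(2,1)$.

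The congruence \eqref{mc2} is proved analogously from \eqref{ideven} at $r=2$, which gives
\[
\frac{5}{4}\sum_{k=0}^{n-1}\frac{\binom{2k}{k}}{(-16)^k(2k+1)^2}
=\HH_n(2)+\frac{1}{4}\sum_{k=0}^{n-1}\frac{\binom{2k}{k}}{(-16)^k\binom{n+k}{2k+1}(2k+1)^2};
\]
the correction sum simplifies to $-\tfrac{1}{2}\sum_k 1/\bigl((2k+1)^2(1-p/(2k+1))\prod_{j=0}^{k-1}(1-p^2/(2j+1)^2)\bigr)$, whose $p$-expansion modulo $p^4$ involves only $\HH_n$-sums of depth $\leq 2$, so the subsequent reduction is strictly lighter. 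The main obstacle is the coefficient bookkeeping for \eqref{mc1}: the weight-$5$ doubles $\HH_n(2,3), \HH_n(4,1)$ are required modulo $p$ and $\HH_n(2,1)$ modulo $p^3$ (invoking Lemma~\ref{lemmaHH} drags in $H_n(1,2), H_n(1,3), H_n(2,2), H_n(1,4), H_n(2,3), H_n(3,2)$ to various precisions), so any slip in the expansion of $1/\prod(1-p^2/(2j+1)^2)$ or in the reductions from Section~\ref{Section2} will spoil the cancellations and perturb the $B_{p-5}$-coefficient $3/160$.
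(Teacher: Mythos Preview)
Your proposal is correct and follows essentially the same route as the paper: both proofs specialize \eqref{idodd} and \eqref{ideven} at $r=1,2$, expand the ``correction'' sum via \eqref{CCC} (the paper packages this as \eqref{conbin}), invoke Lemma~\ref{l3} for the alternating sum, and then reduce the resulting $\HH_n$-expressions through Lemma~\ref{lemmaHH} together with the congruences of Section~\ref{Section2} (including the shuffle computation of $H_n(2,2)$). Your intermediate formula for $\sum_{k}\binom{2k}{k}/(16^k(2k+1))$ is exactly \eqref{a2} in the paper after dividing by $\tfrac{3}{2}(-1)^n$.
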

\begin{proof}
According to \eqref{CCC}, it follows that
\begin{align}\label{conbin}\nonumber
{{2k\choose k}\over (-16)^k{n+k\choose 2k+1}}&={(2k+1){2k\choose k}\over (-16)^k(n-k){n+k\choose 2k}}
\equiv -{2\over \left(1-{p\over 2k+1}\right)\left(1-p^2\HH_k(2)+p^4\HH_k(2,2)\right)}\\\nonumber
&\equiv -2\left(1+{p\over 2k+1}+
\left({1\over (2k+1)^2}+\HH_k(2)\right)\,p^2+\left({1\over (2k+1)^3}+{\HH_k(2)\over 2k+1}\right)\,p^3
\right.\\
&\qquad+\left.
\left({1\over (2k+1)^4}+{\HH_k(2)\over (2k+1)^2}+\HH_k(4)+\HH_k(2,2)\right)\,p^4
\right)\pmod{p^5}.
\end{align}
Therefore, by \eqref{idodd} for $r=1$, \eqref{conbin} and \eqref{alts}, we get
\begin{align}\label{a2}\nonumber
{3(-1)^n\over 2}\sum_{k=0}^{n-1}{{2k\choose k}\over 16^k(2k+1)}&\equiv
-2p\HH_{n}(2) -p^2(\HH_{n}(3)+2\HH_{n}(2,1))-p^3\HH_{n}(4)\\
&\qquad-p^4(\HH_{n}(5)+\HH_{n}(2,3)+\HH_{n}(4,1)))\pmod{p^5}.
\end{align}
Now, by Lemma~\ref{lemmaHH} we can replace the terms $\HH_n$ with the corresponding expressions involving $H_{p-1}$ and $H_n$. So the right-hand side of \eqref{a2} becomes
\begin{align*}
&-2pH_{p-1}(2)-p^2H_{p-1}(3)-p^3H_{p-1}(4)-p^4H_{p-1}(5)\\
&\qquad +{p\over 2}H_n(2)+{p^2\over 8}(H_n(3)+2H_n(1,2))
+{p^3\over 16}\left(H_n(4)+4H_n(1,3)+2H_n(2,2)\right)\\
&\qquad-{p^4\over 32}\left(H_n(5)+4H_n(2,3)+3H_n(3,2)+7H_n(1,4)\right) \pmod{p^5}.
\end{align*}
Since
$$H_n(2,2)={1\over 2}\left( H_n(2)^2-H_n(4)\right)\equiv -{31\over 5}\, pB_{p-5} \pmod{p^2},$$
by (i), (ii), and (vi), the above expression simplifies to
$$-2p H_{p-1}(2)+{1\over 2}\,pH_n(2)+{1\over 8}\,p^2 H_n(3)
+{1\over 4}\,p^2\left(H_n(1,2)+pH_n(1,3)\right)+{513\over 320}\,p^4 B_{p-5}
\pmod{p^5}.$$
Finally, we apply \eqref{Hp2}, \eqref{H2}, \eqref{H3} and \eqref{unoduepiuunotre}
to obtain
$${H_{p-1}(1)\over 8}+{9\over 320}\,p^4 B_{p-5}\pmod{p^5}$$
which concludes our proof of \eqref{mc1}.

As regards \eqref{mc2}, by \eqref{ideven} for $r=2$, and \eqref{conbin}, we have
\begin{align}\label{a1}\nonumber
{5}\sum_{k=0}^{n-1}{{2k\choose k}\over (2k+1)^2(-16)^k}&\equiv
4\HH_{n}(2)-2(\HH_{n}(2)+p\HH_{n}(3)+p^2(\HH_{n}(4)+\HH_{n}(2,2))\\
&\qquad +p^3(\HH_{n}(5)+\HH_{n}(2,3)))\pmod{p^4}.
\end{align}
As before, after replacing the terms $\HH_n$, the right-hand side of \eqref{a1} becomes
\begin{align*}
&2H_{p-1}(2)-2pH_{p-1}(3)-2p^2H_{p-1}(4)-2p^3H_{p-1}(5)\\
&\qquad -{1\over 2}H_n(2)+{p\over 4}H_n(3)+{p^2\over 8}\left(H_n(4)- H_n(2,2)\right)\\
&\qquad+{p^3\over 16}\left(H_n(5)-2H_n(2,3)-H_n(3,2)\right)  \pmod{p^4}.
\end{align*}
By (i), (ii), and (vi), it simplifies to
$$2H_{p-1}(2)-{1\over 2}H_n(2)+{1\over 4}p H_n(3) +{9\over 4}p^3B_{p-5}     \pmod{p^4}.$$
By using \eqref{Hp2}, \eqref{H2} and \eqref{H3}, we get
$${H_{p-1}(1)\over p}+{7\over 4}\,p^3B_{p-5} \pmod{p^4}$$
and \eqref{mc2} is established.
\end{proof}
Note that the congruence (\ref{mc1}) as well as (\ref{mc2}) modulo  $p^3$ in Theorem~\ref{TM}
 were first conjectured by Z.~W.~Sun in \cite[Conjecture~5.1]{Sunzw:11}.
From (\ref{mc1}) and (\ref{A}) we easily get the following congruence, which was proposed in \cite[A32]{Sunzwopen}.
\begin{corollary}
Let $p>5$ be a prime. Then
$$
\sum_{k=0}^{(p-3)/2}\frac{\binom{2k}{k}\overline{H}_k(2)}{16^k(2k+1)}\equiv (-1)^{\frac{p-1}{2}}
\frac{H_{p-1}(1)}{12p^2} \pmod{p^2}.
$$
\end{corollary}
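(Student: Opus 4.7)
My plan is to read off the modulo-$p^2$ value of the target sum from the exact polynomial identity \eqref{A} by setting $t=1/16$ and using that, for primes $p>3$, the value $w_{(p-1)/2}(1/2)=(-1)^{(p-1)/2}$ is an \emph{exact} integer equality (formula \eqref{eq10.5}), so the left-hand side of \eqref{A} vanishes identically. Swapping the order of summation on the right, this yields the exact numerical identity
\begin{equation*}
0=\sum_{j=0}^{n-1}(-1)^j p^{2j}A_j,\qquad A_j:=\sum_{k=0}^{n-1}\frac{\binom{2k}{k}\HH_k(\{2\}^j)}{16^k(2k+1)},\qquad n=\frac{p-1}{2},
\end{equation*}
in which $A_1$ is precisely the sum we wish to evaluate.

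Every $A_j$ is $p$-integral, since the denominators consist of powers of $2$ together with odd factors at most $p-2$. Truncating the identity above modulo $p^4$ therefore leaves
\begin{equation*}
p^2 A_1\equiv A_0\pmod{p^4},
\end{equation*}
the contributions from $j\geq 2$ being absorbed into the modulus. At this point I would invoke \eqref{mc1} of Theorem~\ref{TM}, which expresses $A_0$ modulo $p^5$ as $(-1)^{(p-1)/2}\bigl(H_{p-1}(1)/12+\tfrac{3}{160}\,p^4 B_{p-5}\bigr)$. Because $p-1\nmid p-5$ for $p>5$, the number $B_{p-5}$ is $p$-integral by von Staudt--Clausen, so the $p^4 B_{p-5}$ term disappears modulo $p^4$, producing
\begin{equation*}
p^2 A_1\equiv(-1)^{(p-1)/2}\,\frac{H_{p-1}(1)}{12}\pmod{p^4}.
\end{equation*}

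To finish, Wolstenholme's theorem (the $r=1$ case of (i)) asserts $H_{p-1}(1)\equiv 0\pmod{p^2}$, so the right-hand side is in fact divisible by $p^2$ and $H_{p-1}(1)/(12p^2)$ is $p$-integral; dividing through by $p^2$ yields the claimed congruence $A_1\equiv(-1)^{(p-1)/2}H_{p-1}(1)/(12p^2)\pmod{p^2}$. I do not anticipate any real obstacle here: the heavy lifting is already packaged into Theorem~\ref{TM} and the identity \eqref{A}, and what remains is the bookkeeping of $p$-adic orders. The only point requiring care is confirming that the tail $\sum_{j\geq 2}(-1)^j p^{2j}A_j$ vanishes modulo $p^4$, which follows at once from the uniform $p$-integrality of the $A_j$.
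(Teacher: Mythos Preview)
Your proposal is correct and follows exactly the route the paper indicates: the paper merely writes ``From \eqref{mc1} and \eqref{A} we easily get the following congruence,'' and you have spelled out precisely that derivation---specializing \eqref{A} at $t=1/16$, using \eqref{eq10.5} to see the left-hand side vanishes identically, isolating the $j=1$ term modulo $p^4$, and then inserting \eqref{mc1} together with Wolstenholme's theorem to divide by $p^2$. The $p$-integrality check for the $A_j$ is the only point requiring comment, and you have handled it correctly.
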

Note that the value of the corresponding infinite series is known (see \cite[p.~230--231]{HH:10})
$$
\sum_{k=1}^{\infty}\frac{\binom{2k}{k}\overline{H}_k(2)}{16^k(2k+1)}=\frac{\pi^3}{648}.
$$


\vspace{0.2cm}

{\bf\small Acknowledgement.}
The first and second authors wish to thank
the Abdus Salam International Centre for Theoretical Physics (ICTP), Trieste, Italy,
 and, in particular,  the Head of the Mathematics Section of the  ICTP,
Professor Ramadas Ramakrishnan for the hospitality and excellent working conditions during their summer visit in 2011 when  part of this work was done.

\end{document}